\DeclareMathOperator{\R}{\mathbb{R}} 
\DeclareMathOperator{\N}{\mathbb{N}} 
\newcommand{\eps}{\varepsilon} 
\newtheorem{theorem}{Theorem}
\newtheorem{lemma}[theorem]{Lemma}
\newtheorem{proposition}[theorem]{Proposition}
\newtheorem{corollary}[theorem]{Corollary}
\newtheorem{question}[theorem]{Question}
\newtheorem{remark}[theorem]{Remark}
\begin{document}

\title{Finding cliques and dense subgraphs using edge queries}
\author{
	Endre Cs\'oka\thanks{Alfr\'ed R\'enyi Institute of Mathematics; 
    \url{csokaendre@gmail.com}.}
    \and
    Andr\'as Pongr\'acz\thanks{Alfr\'ed R\'enyi Institute of Mathematics; 
    \url{andras.pong@gmail.com}.}
}
\date{\today}

\maketitle


\begin{abstract}
We consider the problem of finding a large clique in an Erd\H{o}s--R\'enyi random graph where we are allowed unbounded computational time but can only query a limited number of edges. 
Recall that the largest clique in $G \sim G(n,1/2)$ has size roughly $2\log_{2} n$. 
Let $\alpha_{\star}(\delta,\ell)$ be the supremum over $\alpha$ such that there exists an algorithm that makes $n^{\delta}$ queries in total to the adjacency matrix of $G$, in a constant $\ell$ number of rounds, 
and outputs a clique of size $\alpha \log_{2} n$ with high probability. 
We give improved upper bounds on $\alpha_{\star}(\delta,\ell)$ for every $\delta \in [1,2)$ and $\ell \geq 3$. 
We also study analogous questions for finding subgraphs with density at least $\eta$ for a given $\eta$, and prove corresponding impossibility results. 
\end{abstract} 

\noindent\emph{Keywords and phrases:} Graph algorithms, random graphs, cliques, dense subgraphs, adaptive algorithms\\
\emph{MSC2020 codes:} 05C80, 05C85, 68Q87, 68W20


\section{Introduction} \label{sec:intro} 

Finding a subgraph with certain properties in a graph is a central topic of theoretical computer science. 
It is well-known that finding a maximum clique or a Hamiltonian cycle are NP-complete problems~\cite{Karp1972}. 
Even approximating the size of the maximum clique within a given factor is hard in standard computational models~\cite{FGLSSz91}. 

Ferber et al. proposed the Subgraph Query Problem in \cite{ferber2016finding,ferber2017finding}. 
The general question is to find a subgraph in $G(n,p)$ with high probability that satisfies a given monotone graph property by querying as few pairs as possible, where a query is simply checking whether the given pair constitutes an edge. 
In some sources, the requirement that the algorithm succeeds with high probability is replaced by the equivalent requirement that the algorithm succeeds with probability at least $1/2$. 
In \cite{ferber2016finding,ferber2017finding} Hamiltonian cycles and long paths were considered in sparse Erd\H{o}s-R\'enyi graphs. 
In the same setup, Conlon et al.~\cite{conlon2019online} studied the problem of finding a fixed subgraph (e.g., a clique of given size). 
For the related Planted Clique Problem, see \cite{huleihel2021random,mardia2020finding,RS20_ALEA,rashtchian2021average}.

A natural special case of the Subgraph Query Problem is the Maximum Clique Query Problem (MCQP) introduced in \cite{FGNRT20}: 
for a given $p\in (0,1)$, what is the size of the largest clique that we can find in $G(n,p)$ with high probability by using at most $n^\delta$ queries ($\delta\in [1,2]$)? 
It turns out that the parameter $p$ is not so important (as long as it is a fixed constant): it is usually set to $p=1/2$. 
The present paper also works under this assumption, although the results could be generalized to arbitrary $p\in (0,1)$. 

The size of the largest clique in $G(n,1/2)$ is asymptotically $2\log n$ with high probability, where $\log$ is the base 2 logarithm; for a more precise estimate, see \cite{matula1972,bollobas1976cliques,lugosi17}. 
This classical result answers the question for $\delta=2$: if we are allowed to query all edges, we can find the maximum clique in the graph, and it has approximately $2\log n$ vertices with high probability. 
Note that the complexity of the algorithm is only measured in the number of queries. 
In particular, the actual runtime of the algorithm is irrelevant: even though finding the maximum clique is an NP-complete problem, which means that to our best knowledge it likely requires an exponentially large amount of time in standard computational models, we still view this as a quadratic algorithm in our framework. 

In the $\ell$-adaptive variant of MCQP, the queries are divided into $\ell$ rounds for some $\ell\in \N$. 
The algorithm makes the queries within a round simultaneously. 
The effect of this modification is that within a round, we have to make a decision for several moves ahead, only taking into consideration the results of the previous rounds. 
Informally speaking, we cannot adapt our next choice for the queried pair if another pair in the same round yielded an unfavorable result. 
The original version of the problem can be viewed as the infinitely adaptive variant, that is, $\ell=\infty$. 

The first non-trivial upper bound was shown by Feige at al. in \cite{FGNRT20} for the $\ell$-adaptive version. 
They proved that for all $\delta\in [1,2)$ and $\ell\in \N$ there exists a constant $\alpha<2$ such that it is impossible to find a clique of size $\alpha\log n$ in $\ell$ rounds using $n^\delta$ queries altogether. 
That is, $\alpha_\star(\delta,\ell)<2$ for all $\ell\in \N$. 
Soon after, an improvement was shown in \cite{alweiss2021subgraph} by Alweiss et al. 
They studied the fully adaptive variant ($\ell=\infty$) of MCQP, and proved that $\alpha_\star(\delta, \infty)\leq 1+\sqrt{1-(2-\delta)^2/2}$. 
Clearly, this is also an upper bound for the $\ell$-adaptive variant for any $\ell\in \N$. 
To date, this was the strongest known upper bound, except for the well-understood $\ell=2$ case and some  estimates for $\ell=3$. 
These special cases were investigated in \cite{feige2021tight} by Feige and Ferster. 
In the $\ell=2$ case they have shown that $\alpha_{\star}(\delta, 2) = 4\delta/3$ for $\delta \in [1,6/5]$ and $\alpha_{\star}(\delta, 2) \leq 1 + \sqrt{1-(2-\delta)^{2}}$ for $\delta \in [6/5,2]$. 

In this paper, we improve on the idea introduced by Feige and Ferster in~\cite{feige2021tight}. 
A monotone increasing function $\gamma: \N\cup\{\infty\} \rightarrow \R^+$ is defined in Section~\ref{sec:comb} as a solution of a combinatorial problem. 
Using this function, we re-prove the results of \cite{alweiss2021subgraph} for $\ell=\infty$ and \cite{feige2021tight} for $\ell=2$, and obtain stronger results for $\ell=3$ and non-trivial estimates for all $\ell\geq 4$. 

\begin{theorem}\label{thm:main}
For every $\delta \in [1,2]$ and $\ell \geq 3$, including $\ell=\infty$, we have 
\begin{equation}\label{eq:gammaub}
\alpha_{\star} \left( \delta, \ell \right) 
\leq 
1+\sqrt{1-\frac{(2-\delta)^2}{4\gamma(\ell)}}.
\end{equation}

Furthermore, for $\ell=2$ the same estimate applies for $\delta \in [6/5,2]$, and $\alpha_{\star} \left( \delta, \ell \right)\leq 4\delta/3$ for $\delta \in [1, 6/5]$. 
\end{theorem}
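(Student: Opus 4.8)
\emph{Proof strategy.} We argue by contraposition, reducing to a deterministic optimization. Since $\ell$ is a fixed constant, giving each round its own budget of $n^{\delta}$ queries inflates the total only by the factor $\ell=n^{o(1)}$, so we may assume each round makes at most $n^{\delta}$ queries. Suppose some $\ell$-round algorithm of this form outputs a clique of size $\alpha\log_{2}n$ with probability at least $1/2$; the goal is to show $(\alpha-1)^{2}+(2-\delta)^{2}/(4\gamma(\ell))\le 1$. The first step is to condition on a regularity event $\mathcal{R}=\mathcal{R}(G)$, holding with probability $1-o(1)$, that records the only features of $G\sim G(n,1/2)$ the algorithm can exploit: for every clique $C$ of size $O(\log n)$ the common neighbourhood $N(C)$ has size $n^{1-|C|/\log_{2}n+o(1)}$, and inside $N(C)$ the number of cliques of size $s\log_{2}n$ is $n^{(\psi+o(1))\log_{2}n}$ with $\psi=(1-|C|/\log_{2}n)\,s-s^{2}/2$ when this is positive and $0$ otherwise (so in particular $G$ has no clique of size $(2+\Omega(1))\log_{2}n$, and no set of fewer than $n^{s/2}$ vertices contains one of size $s\log_{2}n$). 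These facts follow from first and second moment estimates together with a union bound over the $n^{O(\log n)}$ cliques of size $O(\log n)$; since $\Pr[\neg\mathcal R]=o(1)$, conditioning on $\mathcal R$ keeps the success probability positive.

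\emph{The profile and the one-round lemma.} Track the algorithm by a \emph{profile}: after round $i$, let $\pi_{i}\subseteq[0,2]\times[0,1]$ consist of the pairs $(c,b)$ such that the algorithm has confirmed (queried all pairs of) a clique of size $c\log_{2}n$ together with a set of $n^{b+o(1)}$ of its common neighbours whose internal pairs are all still unqueried --- a ``live reservoir''. Initially $\pi_{0}=\{(0,b):0\le b\le 1\}$, and the output clique has size at most $\max\{\,c+2\min(b,\delta/2):(c,b)\in\pi_{\ell-1}\,\}$, since the last round can do no better than to fully query a subset of one reservoir and read off the clique of size $2\min(b,\delta/2)\log_{2}n$ that $\mathcal R$ guarantees inside it. The key step is a one-round lemma bounding the transition $\pi_{i-1}\rightsquigarrow\pi_{i}$: the queries of round $i$ form a non-adaptive set $Q_{i}$, $|Q_{i}|\le n^{\delta}$, depending only on rounds $<i$, so a new profile point $(c'+g,\,b'-g)$ can only arise from an existing $(c',b')$ by (a) locating, using pairs of $Q_{i}$ only, a clique of size $g\log_{2}n$ inside a chosen sub-reservoir of $n^{b'+o(1)}$ vertices, and (b) cross-querying that sub-reservoir against the whole host set of the clique in (a) --- which by $\mathcal R$ must have size at least $n^{g/2-o(1)}$ --- because the clique itself is not yet identified when $Q_{i}$ is fixed. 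On $\mathcal R$, step (a) forces at least $n^{g-o(1)}$ queries inside the sub-reservoir (a first-moment count over the $n^{O(\log n)}$ possible witness cliques, using the $\psi$-exponents), step (b) costs $n^{g/2+b'-o(1)}$, and --- crucially, since $\|\cdot\|_{r/2}\le\|\cdot\|_{1}$ for $r\ge 2$ --- splitting $Q_{i}$ among many sub-reservoirs cannot beat concentrating it on one. The net conclusion is a short list of per-round inequalities ($g/2+b'\le\delta$, $g\le 2b'$, $0\le b'-g$, $b'\le b_{i-1}\le 1$) describing how the profile may advance under the budget.

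\emph{Aggregation.} Chaining the one-round lemma over $i=1,\dots,\ell$ turns the maximum achievable output clique size into the value of an $\ell$-stage optimization over profiles, which one checks is exactly the combinatorial problem of Section~\ref{sec:comb}; writing one axis as $\beta=\alpha-1$ and the other as $2-\delta$, its feasibility region reduces to $\beta^{2}+(2-\delta)^{2}/(4\gamma(\ell))\le 1$, which is \eqref{eq:gammaub}. The fully adaptive case $\ell=\infty$ is the limit, governed by $\gamma(\infty)$, and reproduces the bound of \cite{alweiss2021subgraph}; the case $\ell=2$ yields \eqref{eq:gammaub} in the range where $2-\delta$ is the binding variable, i.e.\ $\delta\ge 6/5$, while for $\delta\in[1,6/5]$ the same optimization, solved explicitly, gives the stronger two-round bound $\alpha_{\star}(\delta,2)\le 4\delta/3$ of Feige--Ferster \cite{feige2021tight}.

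\emph{Main obstacle.} The heart of the matter, and the place where the argument must improve on \cite{feige2021tight}, is the one-round lemma: controlling a non-adaptive batch of $n^{\delta}$ queries that hedges across super-polynomially many candidate sub-cliques and only commits to survivors in a later round. Making the first-moment estimate robust to this hedging --- showing that on $\mathcal R$ concentrating the budget is always at least as good --- is exactly what pins down the combinatorial problem defining $\gamma$, and extracting the clean ellipse inequality $\beta^{2}+(2-\delta)^{2}/(4\gamma(\ell))\le 1$ from the resulting $\ell$-stage optimization is the main computation; everything else amounts to bookkeeping the $n^{o(1)}$ and $o(\log n)$ slacks so that they disappear into the constant $\alpha$.
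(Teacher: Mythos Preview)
Your route is quite different from the paper's and the crucial step is only asserted, not carried out. You write that chaining the one-round lemma ``is exactly the combinatorial problem of Section~\ref{sec:comb}'', but $\gamma(\ell)$ is defined as a max--min over edge-labelings $\lambda:E(K_N)\to\{1,\dots,\ell\}$ and matchings $\mathcal M$, measuring the asymptotic fraction of \emph{critical} edges (edges queried in a round earlier than some matching edge covering an endpoint). Your per-round constraints $g/2+b'\le\delta$, $g\le 2b'$, $b'\le b_{i-1}$ describe a chain of linear inequalities for a single growing clique-plus-reservoir; there is no labeling, no matching, and no obvious bijection between feasible profiles and labeled graphs with matchings. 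The profile framework you sketch is in the spirit of \cite{feige2021tight}, where it was pushed through for $\ell=2,3$ by solving a concrete finite optimization; extracting the quantity $\gamma(\ell)$ from it for general $\ell$ is precisely the new content of this paper, and you have not supplied the link. A secondary issue is that the profile model (a confirmed clique plus a \emph{fully unqueried} common-neighbour reservoir) already restricts what the algorithm may do; you would have to argue that any output clique can be decomposed this way without loss, and ``hedging'' alone does not address that.

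The paper's own proof is much shorter and shows directly where $\gamma(\ell)$ enters. One performs a first-moment bound over \emph{tuples}: for any $m\in[0,\alpha/2]$, a tuple consists of $M\approx m\log n$ query indices in $\{1,\dots,n^\delta\}$ and $(\alpha-2m)\log n$ explicit vertices; it encodes the $N=\alpha\log n$ vertices obtained as the $2M$ endpoints of those queries together with the listed vertices. The $M$ query-edges form a matching $\mathcal M$ in the $K_N$ on these vertices, each edge labeled by its round. For a fixed tuple, every \emph{non}-critical pair in $K_N$ is an independent fair coin, so the probability the tuple encodes a clique is at most $2^{-(\binom{N}{2}-C)}$ with $C$ the number of critical edges; choosing $\mathcal M$ to minimize $C$ gives $C\le\gamma(\ell)\binom{2M}{2}$ by definition. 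The union bound then yields, for every $m\in[0,\alpha/2]$,
\[
\alpha^{2}/2-\alpha-2\gamma(\ell)m^{2}+(2-\delta)m\le 0,
\]
and maximizing the left side over $m$ (optimum at $m_0=(2-\delta)/(4\gamma(\ell))$, which lies in $[0,\alpha/2]$ for all $\ell\ge3$ and for $\ell=2$ exactly when $\delta\ge6/5$) gives~\eqref{eq:gammaub}; for $\ell=2$, $\delta\in[1,6/5]$ the endpoint $m=\alpha/2$ gives $4\delta/3$. No regularity event, no profile tracking, and no one-round lemma are needed.
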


We compute some values of the function $\gamma$ precisely, namely $\gamma(1)=0$, $\gamma(2)=1/4$, $\gamma(3)=3/8$, and $\gamma(\infty)=1/2$.
For $\ell\in\N, \ell\geq 3$ we prove the upper bound $\gamma(\ell)\leq 1/2-1/(3\cdot 2^{\ell-1}-4\ell+8)$; see Theorem~\ref{thm:l-labels}. 
In particular, $\gamma(4)\leq 7/16=0.4375$. 
The bounds provided by Theorem~\ref{thm:l-labels} are probably not tight for $\ell\geq 4$; for $\ell=4$, the best lower bound we could prove is 
$\gamma(4)\geq 5/12$, which we also do not believe to be tight.
Nevertheless, putting these values and estimates into Theorem~\ref{thm:main} yields the following more concrete result. 

\begin{corollary}\label{cor:three_rounds_n_queries}
For every $\delta \in [1,2]$ and $\ell\geq 3$ we have 
\begin{equation}\label{eq:gammaubeta}
\alpha_{\star} \left( \delta, \ell \right) 
\leq 
1+\sqrt{1-\frac{(2-\delta)^2}{2-1/(3\cdot 2^{\ell-3}-\ell+2)}}.
\end{equation}
\end{corollary}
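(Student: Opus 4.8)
The plan is to derive \eqref{eq:gammaubeta} purely formally from two ingredients already available: the bound $\alpha_{\star}(\delta,\ell)\leq 1+\sqrt{1-(2-\delta)^2/(4\gamma(\ell))}$ of Theorem~\ref{thm:main} (valid for $\delta\in[1,2]$ and $\ell\geq 3$) and the explicit estimate $\gamma(\ell)\leq 1/2-1/(3\cdot 2^{\ell-1}-4\ell+8)$ of Theorem~\ref{thm:l-labels}. The only real content is to substitute the second into the first in the correct direction and then simplify the exponent arithmetic.

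First I would record the monotonicity used in the substitution: for a fixed $\delta\in[1,2]$ the map $x\mapsto 1+\sqrt{1-(2-\delta)^2/(4x)}$ is increasing in $x>0$, since increasing $x$ decreases the fraction $(2-\delta)^2/(4x)$ and hence increases the square root. Therefore an \emph{upper} bound on $\gamma(\ell)$ produces, through \eqref{eq:gammaub}, an \emph{upper} bound on $\alpha_{\star}(\delta,\ell)$ — this is the point that needs care. I would also check that everything under the root stays nonnegative: $\gamma$ is monotone increasing with $\gamma(3)=3/8$, so $\gamma(\ell)\geq 3/8$ and $4\gamma(\ell)\geq 3/2$ for all $\ell\geq 3$; since $(2-\delta)^2\leq 1$, the fraction is at most $2/3<1$, so the bound is well-defined and in fact nontrivial (at most $1+\sqrt{1/3}<2$).

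Then I would carry out the substitution and the algebra. Plugging $\gamma(\ell)\leq 1/2-1/(3\cdot 2^{\ell-1}-4\ell+8)$ into \eqref{eq:gammaub} gives
\begin{align*}
\alpha_{\star}(\delta,\ell) &\leq 1+\sqrt{1-\frac{(2-\delta)^2}{4\left(1/2-1/(3\cdot 2^{\ell-1}-4\ell+8)\right)}} \\
&= 1+\sqrt{1-\frac{(2-\delta)^2}{2-4/(3\cdot 2^{\ell-1}-4\ell+8)}}.
\end{align*}
The remaining step is the elementary identity $3\cdot 2^{\ell-1}-4\ell+8 = 4\,(3\cdot 2^{\ell-3}-\ell+2)$, checked term by term (using $2^{\ell-1}/4=2^{\ell-3}$), which is a positive integer for every $\ell\geq 3$; cancelling the factor $4$ turns $4/(3\cdot 2^{\ell-1}-4\ell+8)$ into $1/(3\cdot 2^{\ell-3}-\ell+2)$ and yields exactly \eqref{eq:gammaubeta}. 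I would note as a sanity check that for $\ell=3$ one has $1/2-1/(3\cdot 4-12+8)=3/8=\gamma(3)$, so at $\ell=3$ the substitution is lossless and \eqref{eq:gammaubeta} agrees with the exact value $\gamma(3)=3/8$; letting $\ell\to\infty$ the correction term vanishes and one recovers $1+\sqrt{1-(2-\delta)^2/2}$, consistent with $\gamma(\infty)=1/2$.

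I do not expect any genuine obstacle: the corollary is a one-line consequence of Theorems~\ref{thm:main} and~\ref{thm:l-labels}, whose proofs contain all the work. The two things worth being deliberate about are (i) that the inequality chain runs in the right direction — a bound $\gamma(\ell)\leq B$ gives $4\gamma(\ell)\leq 4B$, hence $(2-\delta)^2/(4\gamma(\ell))\geq (2-\delta)^2/(4B)$, hence $1+\sqrt{1-(2-\delta)^2/(4\gamma(\ell))}\leq 1+\sqrt{1-(2-\delta)^2/(4B)}$ — and (ii) the exponent bookkeeping in factoring out the $4$.
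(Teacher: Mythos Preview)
Your proposal is correct and matches the paper's own derivation: the corollary is stated immediately after the sentence ``putting these values and estimates into Theorem~\ref{thm:main} yields the following more concrete result,'' with no further proof given. You carry out exactly that substitution, correctly noting the monotonicity of $x\mapsto 1+\sqrt{1-(2-\delta)^2/(4x)}$ so that the inequality goes the right way, and correctly simplifying via $3\cdot 2^{\ell-1}-4\ell+8=4(3\cdot 2^{\ell-3}-\ell+2)$.
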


In particular, $\alpha_{\star} \left( \delta, 3 \right) \leq 1+\sqrt{1-\frac{2}{3}(2-\delta)^2}$. 
For instance, $\alpha_{\star} \left( 1, 3 \right) \leq 1+1/\sqrt{3}\approx 1.577$ by Corollary~\ref{cor:three_rounds_n_queries}. 
The earlier best estimate, proven in~\cite{feige2021tight}, was $\alpha_{\star} \left( 1, 3 \right) \leq 1.62$. 

We define the Maximum Dense Subgraph Query Problem (MDSQP), a natural generalization of MCQP. 
Given a $\delta \in [1,2]$, $\ell\in \N\cup\{\infty\}$, and $\eta\in (1/2,1]$. 
The problem is to find the largest possible subgraph by using at most $n^\delta$ queries (and unlimited computational time) in $G(n,1/2)$ with edge density at least $\eta$, with high probability. 
A recent result of Balister et al.~\cite{balister2019dense} determines the size of the largest subgraph in $G(n,1/2)$ having edge density at least $\eta$ (with high probability). 
For $\eta\in (1/2,1]$, it is asymptotically $\frac{2}{1-H(\eta)}\log n$, where $H(\eta)=-\eta\log\eta - (1-\eta)\log(1-\eta)$ is the Shannon entropy. 
Note that this is consistent with the above discussion: cliques in a graph are exactly the subgraphs with edge density $\eta=1$, and indeed $\frac{2}{1-H(1)}\log n = 2\log n$. 
Just as in the Maximum Clique Query Problem, we only expect to achieve this size $\frac{2}{1-H(\eta)}\log n$ by an $\ell$-adaptive algorithm using $n^\delta$ queries if $\delta=2$, that is, when the whole graph is uncovered. 
The natural problem is to determine $\alpha_{\star} \left( \delta, \ell, \eta \right)$, the supremum of all $\alpha$ such that an appropriate $\ell$-adaptive algorithm using $n^\delta$ queries finds a subgraph in $G(n,1/2)$ with density at least $\eta$ and size $\alpha\log n$. 
For a lower bound, a greedy algorithm using a linear number of queries (i.e., $\delta=1$) was presented in \cite{DDK10} by Das Sarma et al. 
Their method could provide a general lower estimate, however it is hard to make it explicit. 
So rather than proving a formula, they focused on a numerical result, and showed that $\alpha_{\star} \left( 1, \infty, 0.951 \right)\geq 2$. 
In other words, there is a fully adaptive algorithm using a linear number of queries that finds a subgraph in $G(n,1/2)$ with size at least $2\log n$ and density at least $0.951$ with high probability. 

Using the same techniques as above for the MCQP, we prove the following upper estimate for $\alpha_{\star} \left( \delta, \ell, \eta \right)$. 

\begin{theorem}\label{thm:dense}
Let $\delta \in [1,2]$, $\ell \geq 3$, including $\ell=\infty$, and $\eta\in (3/4,1]$. 
Given an $\alpha$, we define $m_1$ as the smallest solution of the equation $4\gamma(\ell)m\left(1+\log\frac{\eta\alpha^2/2-2\gamma(\ell)m^2}{\alpha^2/2-2\gamma(\ell)m^2}\right)=2-\delta$ on $[0,\alpha/2]$; if there is no such solution, then $m_1=\alpha_1=\infty$. Let $m_2=\alpha/2$. 
Let $\alpha_i$ be the largest solution of the equation 
$$(\alpha^2/2-2\gamma(\ell)m_i^2)\left(1-H\left(\frac{\eta\alpha^2/2-2\gamma(\ell)m_i^2}{\alpha^2/2-2\gamma(\ell)m_i^2}\right)\right)-\alpha+(2-\delta)m_i= 0$$
for $i=1,2$. 
Let $\alpha_0=\min(\alpha_1,\alpha_2)$.   
Then $\alpha_{\star} \left( \delta, \ell, \eta \right) \leq \alpha_0$.
\end{theorem}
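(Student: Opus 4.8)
The plan is to adapt the counting/first-moment argument that underlies Theorem~\ref{thm:main} to the dense-subgraph setting, replacing "clique on a vertex set" by "dense spot": a pair $(S,H)$ where $S$ is a candidate vertex set of size $\alpha\log n$ and $H$ is a choice of which $\eta\binom{|S|}{2}$ pairs inside $S$ are required to be edges. The key structural input from Section~\ref{sec:comb} is the combinatorial function $\gamma(\ell)$, which quantifies, for any $\ell$-adaptive querying strategy, how few of the pairs inside a target set the algorithm can actually have queried: after $\ell$ rounds totalling $n^\delta$ queries, for a typical output set $S$ of size $\alpha\log n$ the algorithm has observed at most roughly $2\gamma(\ell)m^2(\log n)^2$ of the $\binom{|S|}{2}\sim \alpha^2(\log n)^2/2$ internal pairs, where $m\log n$ plays the role of an ``observed radius'' parameter optimized by the adversary. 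This is exactly where the two regimes $m_1$ (queries are the binding constraint) and $m_2=\alpha/2$ (the whole set could in principle be touched) come from, and the equation defining $m_1$ — namely $4\gamma(\ell)m(1+\log\frac{\eta\alpha^2/2-2\gamma(\ell)m^2}{\alpha^2/2-2\gamma(\ell)m^2})=2-\delta$ — is precisely the statement that the number of distinct ``query patterns'' the algorithm can generate, times the number of ways to complete them to a valid dense configuration, equals $n^{2-\delta}$; so $m_1$ is the threshold below which a union bound over all reachable configurations still beats $n^\delta$.

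Concretely I would proceed as follows. First, fix an arbitrary $\ell$-adaptive algorithm using $n^\delta$ queries and condition on its transcript; by the combinatorial lemma yielding $\gamma(\ell)$, show that with high probability every set $S$ of size $\alpha\log n$ that the algorithm could output has at least $(\alpha^2/2-2\gamma(\ell)m^2)(\log n)^2$ internal pairs whose status is still unrevealed, for the relevant value of $m\in\{m_1,m_2\}$. Second, on these unrevealed pairs the graph is still a fresh $G(\cdot,1/2)$, so the probability that $S$ supports a subgraph of density $\ge\eta$ is controlled: the number of unrevealed edges must exceed $\eta\binom{|S|}{2}$ minus the (at most) $2\gamma(\ell)m^2(\log n)^2$ already-revealed edges, an event of probability $2^{-(\alpha^2/2-2\gamma(\ell)m^2)(\log n)^2(1-H(\frac{\eta\alpha^2/2-2\gamma(\ell)m^2}{\alpha^2/2-2\gamma(\ell)m^2}))+o((\log n)^2)}$ by the standard binomial-tail/entropy estimate of Balister et al. Third, union-bound over the reachable sets $S$: there are at most $n^{\alpha\log n}$ sets a priori, but only roughly $n^{(2-\delta)m\log n}$ ``query-reachable neighbourhoods'' in the $m_1$ regime, which is the content of the $\alpha_1$ equation. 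Multiplying the count by the per-set success probability and requiring the exponent to be negative gives exactly the equation
$$(\alpha^2/2-2\gamma(\ell)m_i^2)\Bigl(1-H\Bigl(\tfrac{\eta\alpha^2/2-2\gamma(\ell)m_i^2}{\alpha^2/2-2\gamma(\ell)m_i^2}\Bigr)\Bigr)-\alpha+(2-\delta)m_i=0,$$
whose largest root is the critical $\alpha_i$; taking $\alpha_0=\min(\alpha_1,\alpha_2)$ over the two regimes yields $\alpha_\star(\delta,\ell,\eta)\le\alpha_0$. One must also check the boundary cases (when the $m_1$ equation has no solution on $[0,\alpha/2]$, set $\alpha_1=\infty$ so only the $m_2$ regime binds; this is why the statement allows $m_1=\alpha_1=\infty$) and verify that the argument of $H$ stays in $[0,1]$, which is where the hypothesis $\eta>3/4$ enters — it guarantees $\eta\alpha^2/2-2\gamma(\ell)m^2\ge 0$ in the relevant range since $\gamma(\ell)\le 1/2$ and $m\le\alpha/2$.

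The main obstacle I anticipate is the first step: proving rigorously that, for a typical output set, the number of internal pairs the algorithm has queried is bounded by $2\gamma(\ell)m^2(\log n)^2$ with the adversary free to choose $m$, and simultaneously controlling how many already-revealed pairs can be edges (since revealed edges reduce the ``entropy budget'' the algorithm still needs from the unrevealed part — this is why $\eta\alpha^2/2-2\gamma(\ell)m^2$, not $\eta\alpha^2/2$, appears inside $H$). This requires pushing the $\gamma(\ell)$ machinery of Section~\ref{sec:comb} through the two-parameter optimization and showing the ``worst case over the algorithm'' reduces to optimizing $m$; the clique case $\eta=1$ is the special instance where $H$ vanishes and the analysis collapses to Theorem~\ref{thm:main}, which serves as a useful consistency check throughout. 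The remaining steps — the binomial-entropy bound and the union bound — are routine given the framework.
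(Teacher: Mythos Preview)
Your overall first-moment/encoding strategy is the right one and does lead to the inequality of Proposition~\ref{prop:ineqde}: for every $m\in[0,\alpha/2]$,
\[
f(m):=(\alpha^2/2-2\gamma(\ell)m^2)\bigl(1-H(p)\bigr)-\alpha+(2-\delta)m\le 0,\qquad p=\frac{\eta\alpha^2/2-2\gamma(\ell)m^2}{\alpha^2/2-2\gamma(\ell)m^2}.
\]
But you have misread where $m_1$ comes from, and this is the actual content of the proof of Theorem~\ref{thm:dense}. The equation $4\gamma(\ell)m(1+\log p)=2-\delta$ is not a combinatorial statement about ``query patterns times completions equalling $n^{2-\delta}$''; it is simply $f'(m)=0$. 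The passage from the inequality above to $\alpha_\star\le\alpha_0$ is one-variable calculus: one checks $f'(0)=2-\delta>0$, computes $f'''(m)$ and shows it is positive on $[0,\alpha/2]$ (this uses $\eta>3/4$, $\gamma(\ell)\le 1/2$, $m\le\alpha/2$), deduces that $f'$ is convex and hence has at most two zeros, and concludes that the maximum of $f$ on $[0,\alpha/2]$ is attained either at the \emph{first} stationary point $m_1$ or at the right endpoint $m_2=\alpha/2$. The numbers $\alpha_1,\alpha_2$ are then the largest $\alpha$ solving $f(m_i)=0$, and one takes the minimum. There are no ``two regimes'' of the algorithm; $m$ is a free parameter of the \emph{analyst's} encoding, and $m_1,m_2$ are merely the two candidates for $\arg\max f$.

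Two smaller corrections to your counting sketch. First, the number of encoding tuples is $(n^\delta)^{m\log n}\cdot n^{(\alpha-2m)\log n}=n^{(\alpha-(2-\delta)m)\log n}$, not $n^{(2-\delta)m\log n}$; the term $(2-\delta)m$ is a \emph{saving} against the naive $n^{\alpha\log n}$, which is why it appears with a plus sign in $f$. Second, the role of non-critical pairs is not that they are ``unrevealed'' (in a good tuple every internal pair has been queried) but that, conditional on the tuple, each non-critical pair is a fresh Bernoulli$(1/2)$: it is queried no earlier than the matching edges that pin down its endpoints in the encoding. Granting all critical pairs as edges for free, one needs a fraction $p$ of the non-critical pairs to be edges, and the hypothesis $\eta>3/4$ is what guarantees $p>1/2$ (not merely $p\ge 0$ as you write), so that the binomial tail bound $2^{-(\cdot)(1-H(p))}$ points in the correct direction.
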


In contrast to Theorem~\ref{thm:main}, this upper bound is only given implicitly. 
Even telling when $\alpha_0=\alpha_2$ holds, either because $m_1=\infty$ or because $\alpha_1>\alpha_2$, seems to be challenging. 
In MCQP, the analogous degenerate condition only applies when $\ell=2$ and $\delta\in[1,6/5]$, yielding the exceptional case in Theorem~\ref{thm:main}. 
Nevertheless, this formula can be used to obtain numerical results up to any prescribed precision, in principle. 
For instance, this shows that a linear, fully adaptive algorithm ($\delta=1, \ell=\infty$) cannot find a subgraph of size $2\log n$ whose density is at least $0.98226$ with high probability. 
Or, to complement the above lower estimate, it also shows that $\alpha_{\star} \left( 1, \infty, 0.951 \right)\leq 2.48227$. 
The trivial upper bound for $\alpha_{\star} \left( 1, \infty, 0.951 \right)$ is $\frac{2}{1-H(0.951)} < 2.7861$, since there is no subgraph in $G(n,1/2)$ with density at least $0.951$ and size at least $2.7861\log n$ with high probability. 

Note that Theorem~\ref{thm:dense} is a generalization of Theorem~\ref{thm:main}: if $\eta=1$, then $m_1=\frac{2-\delta}{4\gamma(\ell)}$, provided that this value is less than $\alpha/2$. 
Then the defining equation of $\alpha_1$ reduces to the same equation $\alpha^2/2-2\gamma(\ell)m_1^2-\alpha+(2-\delta)m_1= 0$ that yields the formula in Theorem~\ref{thm:main}. 
Furthermore, as $\delta\rightarrow 2$, we have $m_0\rightarrow 0$, and then $\alpha_0$ tends to the solution of the equation $(\alpha^2/2)(1-H(1-\eta))-\alpha=0$. 
Thus $\alpha_0\rightarrow \frac{2}{1-H(\eta)}$, which is the trivial upper bound. 
Hence, for any $\delta\in [1,2)$, the upper bound provided by Theorem~\ref{thm:dense} is strictly smaller than the size of the largest subgraph with density at least $\eta$ (divided by $\log n$), making it a meaningful estimate. 


\section{Two combinatorial problems} \label{sec:comb} 

We pose a question concerning labeled graphs that is closely related to the $\ell$-adaptive Maximum Clique Query Problem and Maximum Dense Subgraph Query Problem: an upper estimate to this question yields an upper estimate to both problems. 
Then we present a simplified variant of this combinatorial question which is easier to handle and can be used to attain lower estimates to the original version. 
In fact, the two variants of the question might be equivalent. 
For certain values of the parameters, we will show that they have the same answer, and this could be true in full generality. 

\begin{question}\label{que:labpri}
Given $\ell\in \N\cup\{\infty\},\ M,N \in \N$ with $N\geq 2M$, a labeling $\lambda: E(K_N)\rightarrow \{1,\ldots, \ell\}$ of the edges of the complete graph $K_N$, and a matching $\mathcal M$ in $K_N$ of size $M$, we say that an edge $uv$ is \emph{critical} if there is an edge $e\in \mathcal M$ covering $u$ or $v$ such that $\lambda(uv)<\lambda(e)$. 
For fixed $\ell, M, N$, labeling $\lambda$ and perfect matching $\mathcal M$, let $\gamma(\ell,M,N,\lambda,\mathcal M)$ be the number of critical edges divided by $\binom{2M}{2}$. 
For each $\ell\in \N$, 
find  
$$\gamma(\ell)= \limsup\limits_{M\rightarrow \infty} \ \max\limits_{N\geq 2M} \  \max\limits_{\lambda}\ \min\limits_{\mathcal M} \gamma(\ell,M,N,\lambda,\mathcal M).$$
\end{question}

The following variant of the same question is the special case of perfect matchings in complete graphs, that is, $N=2M$. 

\begin{question}\label{que:lab}
Given $\ell\in \N\cup\{\infty\},\ M \in \N$, $N=2M$, a labeling $\lambda: E(K_N)\rightarrow \{1,\ldots, \ell\}$ of the edges of the complete graph $K_N$, and a perfect matching $\mathcal M$ in $K_N$, we say that an edge $uv$ is \emph{critical} if $\lambda(uv)$ is strictly less than the maximum of the labels of the two edges in $\mathcal M$ covering $u$ and $v$. 
For fixed $\ell, N$, labeling $\lambda$ and perfect matching $\mathcal M$, let $\gamma'(\ell,N,\lambda,\mathcal M)$ be the ratio of critical edges in the $\binom{N}{2}$ edges. 
For each $\ell\in \N$, 
find  
$$\gamma'(\ell)= \limsup\limits_{N\rightarrow \infty} \  \max\limits_{\lambda}\ \min\limits_{\mathcal M} \gamma(\ell,N,\lambda,\mathcal M).$$
\end{question}

\begin{remark}
Using the language of graph limits~\cite{Lovaszbook}, Question~\ref{que:lab} has the following equivalent reformulation. 
This equivalence also implies that $\limsup$ in Question~\ref{que:lab} can be replaced by $\lim$.

Given $\ell \in \N$, and a measurable labeling $\lambda: [0, 1] \times [0, 1] \rightarrow \{1,\ldots, \ell\}$ of the edges of the complete graphon.
Or for $\ell = \infty$, $\lambda: [0, 1] \times [0, 1] \rightarrow [0, 1]$.
Given a measure-preserving bijection $\mathcal M: [0, 1] \to [0, 1]$, we say that an edge $(u,v) \in [0,1] \times [0,1]$ is \emph{critical} if $\lambda(u,v) < \max\Big(\lambda\big(u, \mathcal M(u)\big),\  \lambda\big(v, \mathcal M(v)\big)\Big)$.
For fixed $\ell$, measurable labeling $\lambda$, and measure-preserving bijection $\mathcal M$, let $\gamma'(\ell,\lambda,\mathcal M)$ be the measure of critical edges. 
For each $\ell\in \N$, 
find 
$$\gamma'(\ell)= \max\limits_{\lambda} \ \min\limits_{\mathcal{M}} \gamma(\ell,\lambda,\mathcal{M}).$$ 

\end{remark}


Obviously, $\gamma'(\ell)\leq \gamma(\ell)$ for all $\ell\in\mathbb{N}$; furthermore, $0=\gamma'(1)\leq \gamma'(2)\leq \cdots\leq \gamma'(\infty)$, $0=\gamma(1)\leq \gamma(2)\leq \cdots\leq \gamma(\infty)$. 
If $\ell=\infty$, it is not worth using the same label twice (in a finite complete graph), hence the problem can be rephrased as follows. 
Consider all $\binom{N}{2}!$ orders of the edges of $K_N$. 
Given a matching $\mathcal M$, an edge $uv$ is critical if an edge in $\mathcal M$ covering $u$ or $v$ appears later in the ordering than $uv$. 
Then $\gamma(\infty,n,\lambda,\mathcal M)$ is the ratio of critical edges, and $\gamma(\infty)= \limsup\limits_{n\rightarrow \infty} \  \max\limits_{\lambda}\ \min\limits_{\mathcal M} \gamma(\infty,n,\lambda,\mathcal M)$. 

Given a matching $\mathcal M$ in $K_N$, we call $b$ the \emph{$\mathcal M$-neighbor} of $a$ if $ab\in \mathcal M$. 
Moreover, the \emph{$\mathcal M$-pair} of an edge $e=uv\notin \mathcal M$ is the edge linking the $\mathcal M$-neighbor of $u$ and the $\mathcal M$-neighbor of $v$. 
Note that this is a proper pairing of non-matching edges that link vertices covered by matching edges. 
(In particular, if $\mathcal M$ is a perfect matching, than it is a proper pairing of non-matching edges in the whole graph $K_N$.)
The \emph{$e$-switch} of $\mathcal M$ is the operation that replaces by $e$ and its $\mathcal M$-pair $e'$ the two edges in $\mathcal M$ that cover the same quadruple of vertices as $e$ and $e'$. 
This produces a new matching of $K_N$ of the same size as $\mathcal M$. 

An \emph{outward critical edge} is a critical edge $uv$ such that exactly one of the endpoints $u$ or $v$ is covered by a matching edge in $\mathcal M$. 
Note that given an $\ell\in \N\cup\{\infty\}$, if an optimal construction for a matching to Question~\ref{que:labpri} (that is, one that attains the critical edge ratio $\gamma'(\ell)$ asymptotically) has no outward critical edge, then it is also an optimal construction for a matching to Question~\ref{que:lab}, and then $\gamma'(\ell)=\gamma(\ell)$. 
More generally speaking, if a construction for a matching has no outward critical edge, then the limiting critical edge ratio obtained from this construction is an upper estimate to both $\gamma'(\ell)$ and $\gamma(\ell)$. 
All constructions in the present paper has this property. 
In fact, we believe that $\gamma'(\ell)=\gamma(\ell)$ for all $\ell\in \N\cup\{\infty\}$. 

We first solve the $\ell=\infty$ case by using a similar idea as that in the proof of \cite[Lemma 13]{alweiss2021subgraph}, except we need to construct the matching in a more complicated way. 

\begin{proposition}\label{prop:linfty}
$\gamma'(\infty)=\gamma(\infty)=1/2$
\end{proposition}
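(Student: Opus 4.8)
We need to show two things: $\gamma(\infty) \le 1/2$ (upper bound, which also gives $\gamma'(\infty) \le 1/2$ since $\gamma' \le \gamma$), and $\gamma'(\infty) \ge 1/2$ (lower bound, which gives $\gamma(\infty) \ge 1/2$ too). Since the proposition asserts both equal $1/2$, and $\gamma' \le \gamma$ always, it suffices to prove $\gamma(\infty) \le 1/2$ and $\gamma'(\infty) \ge 1/2$.

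The plan is as follows. For the \emph{upper bound} $\gamma(\infty) \le 1/2$: fix a labeling $\lambda$ (equivalently, by the remark in the $\ell=\infty$ case, a total order on $E(K_N)$) and a matching $\mathcal M$ of size $M$. Recall the notion of $\mathcal M$-pair: non-matching edges on the $2M$ matched vertices come in pairs $\{e, e'\}$ where $e'$ is obtained by swapping each endpoint of $e$ with its $\mathcal M$-neighbor. The key observation is that $e$ and $e'$ together cover the same four matched vertices, and the two matching edges $f_1, f_2$ on those four vertices interact symmetrically: whether $e$ is critical depends on $\lambda(e)$ versus $\max(\lambda(f_1),\lambda(f_2))$ — wait, more precisely on $\lambda(e) < \lambda(f_i)$ for $f_i$ covering an endpoint of $e$. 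I would instead average over the $e$-switch: doing an $e$-switch replaces $\{f_1,f_2\}$ by $\{e,e'\}$ in the matching. The core combinatorial lemma to aim for is that for each such quadruple $\{e,e',f_1,f_2\}$ of four edges among four vertices (two perfect matchings on $4$ points, or rather one such pair plus one more), at most half of the relevant ``cross'' edges are critical \emph{on average} over the two matching choices, forcing $\min_{\mathcal M} \le 1/2$ in the limit. Concretely, I would group the $\binom{2M}{2} - M$ non-matching edges into $\mathcal M$-pairs and show that summing over $\mathcal M$ and its $e$-switches, each pair contributes at most one critical edge per two matchings — essentially because among $\{e, e', f_1, f_2\}$, whichever of the four edges has the largest label is never critical in the matching containing it, and a counting/convexity argument bounds the rest. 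An averaging argument over an appropriate family of matchings then yields some $\mathcal M$ with critical ratio $\le 1/2 + o(1)$.

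For the \emph{lower bound} $\gamma'(\infty) \ge 1/2$: I would exhibit an explicit order (labeling) on $E(K_N)$ for which every perfect matching has critical ratio $\ge 1/2 - o(1)$. Following the hint that this mimics \cite[Lemma 13]{alweiss2021subgraph} but with a more elaborate matching-free construction, the natural idea is a ``layered'' or recursive construction: split the vertices into blocks, order the edges so that within-block edges come before between-block edges (or vice versa, recursively), and argue that no matter how the adversary picks $\mathcal M$, a constant fraction — tending to $1/2$ — of edges get a late-appearing matching edge on one endpoint. The precise construction likely partitions $[N]$ into two halves $A,B$; puts all $A$–$B$ edges \emph{after} all edges inside $A$ and inside $B$; then recurses. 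For any perfect matching $\mathcal M$, each matching edge is either inside a half or crosses; one shows by induction that the fraction of critical edges is at least $1/2$ up to lower-order terms, with the recursion bottoming out cleanly. I would set this up as a generating-function / recursion on the critical ratio $r_k$ at recursion depth $k$ and check $r_k \to 1/2$.

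The \textbf{main obstacle} I anticipate is the upper bound, specifically making the $e$-switch averaging argument rigorous: one must choose a family of matchings (e.g., an orbit under a group generated by switches, or a random matching with a carefully chosen distribution) such that (i) every non-matching edge is ``tested'' equally often, (ii) the local bound ``at most half of each $\mathcal M$-pair is critical on average'' genuinely aggregates to a global $1/2$ bound, and (iii) the $o(1)$ error terms (from the $M$ matching edges themselves, and boundary effects when $N > 2M$) are controlled. Reducing the general $N \ge 2M$ case of Question \ref{que:labpri} to the perfect-matching situation, using that the constructions have no outward critical edges, is the technical glue; getting the averaging to be exactly tight (not $\le 1/2 + c$ for some $c>0$) is where the real work lies.
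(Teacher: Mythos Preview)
Your overall structure is right, and you have correctly identified the $\mathcal M$-pair / $e$-switch as the central tool. However, the upper-bound argument has a genuine gap. Your local claim---that over $\mathcal M$ and its $e$-switch $\mathcal M'$ the four checks (the pair $\{e,e'\}$ in $\mathcal M$ and the pair $\{f_1,f_2\}$ in $\mathcal M'$) yield at most two critical edges---is false. Take labels $\lambda(f_1)=4,\ \lambda(e)=3,\ \lambda(e')=2,\ \lambda(f_2)=1$: in $\mathcal M$ both $e$ and $e'$ are critical (each has label below $\lambda(f_1)$), while in $\mathcal M'$ the edge $f_2$ is still critical (its label is below $\lambda(e)$), giving three out of four. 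Your observation that the largest-labeled edge is never critical only buys you $3/4$, not $1/2$, and no convexity patch repairs this locally. The paper does not average at all: it simply takes $\mathcal M$ to be the matching of size $M$ whose multiset of labels is \emph{minimal in the anti-lexicographic order}. Then if some $\mathcal M$-pair $\{e,e'\}$ had both edges critical, we would have $\max(\lambda(e),\lambda(e'))<\max(\lambda(f_1),\lambda(f_2))$, so the $e$-switch would produce a matching strictly smaller in anti-lex order---contradiction. The same minimality also rules out outward critical edges. This one-line extremal choice replaces your entire averaging scheme.

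For the lower bound, your recursive halving construction may be salvageable, but note that with cross edges placed \emph{last} the adversary matches entirely within each half and gets zero critical edges, while with cross edges \emph{first} a cross matching can beat $1/2$ on small examples; you would need a careful induction to show the limit is $1/2$. The paper instead uses the plain lexicographic order on edges ($12,13,\dots,1N,23,\dots$) and a direct count: each matching edge $uv$ with $u<v$ forces exactly $u+v-3$ edges to be critical (with multiplicity), these sum to $N(N+1)/2-3N/2\sim\binom{N}{2}$ over any perfect matching, and since each critical edge is counted at most twice, the ratio is at least $1/2-o(1)$. This is both simpler and immediately tight.
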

\begin{proof}
Given an ordering $\prec$ of the set of edges of a complete graph $K_N$ with $N\geq 2M$, let $\mathcal M$ be the minimal matching of size $M$ in the anti-lexicographic order. 

Assume that a non-matching edge $e$ and its $\mathcal M$-pair $e'$ are both critical. 
Then the matching obtained from $\mathcal M$ as the result of the $e$-switch is smaller in the anti-lexicographic order than $\mathcal M$, a contradiction. 
Moreover, there cannot exist an outward critical edge $e$, as we could switch the matching edge $m\in \mathcal M$ sharing an endpoint with $e$ to the edge $e$, obtaining a new matching that is smaller than $\mathcal M$ in the anti-lexicographic order. 
Hence, the number of critical edges is at most $\big(\binom{2M}{2}-M\big)/2$, yielding $\gamma(\infty)\leq 1/2$. 

For the lower bound $\gamma'(\infty)\geq 1/2$, enumerate the $N=2M$ vertices of $K_N$, and let $\lambda$ be the lexicographical order of the edges: 
$12, 13, 14, \ldots, 1N, 23, 24, \ldots, 2N, \ldots, (N-1)N$. 
Let $\mathcal M$ be any perfect matching. 
If $uv\in \mathcal M$ for some $u<v$, then it makes exactly the edges $iv$ and $uj$ critical for all $1\leq i\leq u-1$ and $1\leq j\leq v-1$, $j\neq u$. 
That is, the edge $uv\in \mathcal M$ makes exactly $u+v-3$ edges critical. 
As each number between 1 and $N$ appears exactly once as an endpoint in a matching edge, the sum of all these expressions $u+v-3$ for edges $uv\in \mathcal M$ is $\frac{N(N+1)}{2}-\frac{3N}{2}$, which is asymptotically the number of edges. 
This is the number of critical edges with multiplicity: each critical edge $e$ contributes one or two into this sum, depending on whether the label $\lambda(e)$ is less than only one or both labels of edges in $\mathcal M$ covering the endpoints of $e$. 
As all multiplicities are at most two and they add up to (approximately) the number of edges, (approximately) at least half of the edges must be critical. 
\end{proof}

We note that the matching $1N,2(N-1),\ldots,(N/2)(N/2+1)$ produces approximately $N^2/4$ critical edges (about half the number of edges) in the construction where edges are labeled in lexicographical order. 
This is exactly the matching defined in the first half of the proof of Proposition~\ref{prop:linfty}. 

Proposition~\ref{prop:linfty} is already strong enough to reprove the upper bound on the fully adaptive clique problem that was shown in \cite{alweiss2021subgraph}. 
Moreover, it provides the upper bound $\gamma(\ell)\leq 1/2$ for all $\ell\in \mathbb{N}$. 
Now we improve on this estimate: this is going to yield a better upper bound for the $\ell$-adaptive MCQP for $\ell\geq 3$ than the state of the art (and reproves the best known estimates for $\ell=2$). 

\begin{theorem}\label{thm:l-labels}
\quad
\begin{itemize}
\item $\gamma'(2)=\gamma(2)=1/4$
\item $\gamma'(3)=\gamma(3)=3/8$
\item $\gamma'(\ell)\leq \gamma(\ell)\leq 1/2-1/(3\cdot 2^{\ell-1}-4\ell+8)$ for $\ell\geq 4$
\end{itemize}
\end{theorem}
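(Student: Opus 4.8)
The plan is to prove the three items in two halves: upper bounds on $\gamma(\ell)$, which by the inequality $\gamma'(\ell)\le\gamma(\ell)$ also bound $\gamma'(\ell)$, and — only for $\ell\in\{2,3\}$ — matching lower bounds on $\gamma'(\ell)$; for $\ell\ge4$ only the upper bound is needed. For $\gamma'(2)\ge1/4$ I would colour $K_{2M}$ by partitioning the vertices as $A\sqcup B$ with $|A|=3M/2$ and $|B|=M/2$, giving every edge inside $A$ or inside $B$ the label $2$ and every edge between $A$ and $B$ the label $1$. Only label-$1$ edges can be critical, and such an edge $ab$ ($a\in A$, $b\in B$) is critical exactly when $a$ or $b$ lies in a label-$2$ matching edge. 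For any perfect matching at most $|B|=M/2$ vertices of $A$ are matched into $B$, so at least $M$ vertices of $A$ lie in label-$2$ matching edges; each such $a$ makes all $M/2$ edges $ab$ ($b\in B$) critical, and these edges are pairwise distinct, so the critical ratio is at least $(M^2/2)/\binom{2M}{2}\to1/4$. For $\ell=3$ I would iterate this, most conveniently in the graphon reformulation of the Remark: split $[0,1]$ into two parts, label cross-part edges $1$, and colour the inside of each part by the $\ell=2$ construction with labels shifted to $\{2,3\}$; optimizing over the three split ratios and re-running the counting argument — now bounding, level by level, the mass any measure-preserving bijection is forced to send into higher-label matching edges — should optimize to exactly $3/8$.

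For the upper bounds I would build on Proposition~\ref{prop:linfty}. Fix $\lambda\colon E(K_N)\to\{1,\dots,\ell\}$ with $N\ge2M$ and let $\mathcal M$ be a size-$M$ matching that is lexicographically smallest when its edge-labels are listed in decreasing order (ties broken as in Proposition~\ref{prop:linfty}). The $e$-switch argument then goes through unchanged: no $\mathcal M$-pair contains two critical edges and there is no outward critical edge, so at most one edge per $\mathcal M$-pair is critical, reproving $\gamma(\ell)\le1/2$. The gain over $1/2$ comes from observing that if two matching edges both carry label $1$ then both $\mathcal M$-pairs on their four endpoints are entirely non-critical — and more generally a pair of matching edges whose labels lie below those of the crossing edges contributes non-critical $\mathcal M$-pairs. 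I would turn this into a bound by induction on $\ell$, peeling off the lowest label: either the label-$1$ subgraph $G_1$ has matching number close to $M$, so $\mathcal M$ contains $\Theta(M)$ label-$1$ edges and hence $\Theta(M^2)$ non-critical $\mathcal M$-pairs, beating $1/2$ by a fixed margin; or $\nu(G_1)$ is small, in which case I fix a maximum label-$1$ matching $\mathcal M_1$, note that its uncovered set $U$ is $G_1$-independent so that $K_U$ carries only labels $2,\dots,\ell$ (after relabelling, an $(\ell-1)$-label instance on $|U|\ge 2(M-\nu(G_1))$ vertices with target size $M-\nu(G_1)$), apply the inductive bound there, and pay extra only for the critical edges between $U$ and $V\setminus U$. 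Balancing the two regimes at the right threshold should yield the recursion $T_{\ell+1}=2T_\ell+4(\ell-3)$ with closed form $T_\ell=3\cdot2^{\ell-1}-4\ell+8$, and base value $T_2=6$ — the generic argument gives only $1/3$ at $\ell=2$, which is why the sharper $\gamma(2)\le1/4$ needs a separate argument. That argument I would carry out directly: after fixing a maximum label-$1$ matching, the critical edges are exactly the label-$1$ edges from $U$ to $V\setminus U$, and a Gallai--Edmonds analysis — choosing $\mathcal M_1$ to keep this cut small, and using that too large a cut forces a larger label-$1$ matching — shows the cut has at most $(1/4+o(1))\binom{2M}{2}$ edges, with the complete-bipartite colouring above as the extremal example.

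I expect the technical crux to be making the inductive step tight, i.e.\ bounding the critical cross edges between the peeled-off low-label part and the recursively handled part: the crude estimate $|U|\cdot|V\setminus U|$ is far too wasteful and yields a bound much weaker than $1/2-1/T_\ell$, so one must choose the maximum low-label matching and the recursive matching jointly, exploiting that each family avoids the other's vertices and that a critical cross edge must have label strictly below the (inductively controlled) label of its endpoint's matching edge. A secondary difficulty is showing that the nested $\ell=3$ construction is extremal against every matching — what upgrades the $\ell=3$ bound to an equality — which requires the estimate on the mass forced into higher-label matching edges to be robust to arbitrary measure-preserving bijections, not only block-diagonal ones.
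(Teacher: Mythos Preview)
Your upper-bound strategy --- peel off the lowest label and induct --- is genuinely different from the paper's, and the gap you yourself flag is real and not repairable along the lines you sketch. The anti-lex-minimal matching does give ``at most one critical edge per $\mathcal M$-pair'', but that alone only reproves $1/2$; to beat $1/2$ by the stated margin you need fine control \emph{within} each label class, and anti-lex minimality does not provide it. For instance, inside $X_2$ an alternating path of label-$1$ (blue) edges and label-$2$ (red) matching edges can be arbitrarily long: closing such a path with an edge of label $\ge3$ and swapping produces a matching that is anti-lex \emph{larger}, so minimality forbids nothing. The paper's device is different in kind. It assigns to label $t$ a weight roughly $2^{t-2}$ (with $\varepsilon$-perturbations), takes a \emph{minimum-weight} matching, and shows that inside each $X_t$ the critical edges form a blue graph on the red matching with no alternating cycle and no alternating path containing $c_\ell[t]$ blue edges, where $c_\ell=(1,2^{\ell-2},2^{\ell-2}-2,\dots,2,1)$. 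A separate lemma (proved by its own two-step induction on $k$) shows such a blue graph has at most $(1-1/c_\ell[t])x_t^2+O(x_t)$ edges; a Lagrange-multiplier optimisation over the $x_t$ then gives $\gamma(\ell)\le\tfrac12-\tfrac1{2S_\ell}$ with $S_\ell=\sum_t c_\ell[t]=3\cdot2^{\ell-2}-2\ell+4$. There is no induction on $\ell$, and the cross-edge bookkeeping you anticipate never arises.

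Your $\ell=2$ lower-bound construction is correct (and mildly different from the paper's, which labels the interior of the small part $1$ rather than $2$; both work). Your $\ell=3$ lower-bound proposal, however, fails as written. If $[0,1]$ is split into two parts with cross edges labelled $1$ and each part carrying the shifted $\{2,3\}$-construction, then when the parts have equal measure the bijection matching them to one another uses only label-$1$ matching edges and yields \emph{zero} critical edges; and as the split becomes more unbalanced the construction degenerates to the $\ell=2$ one on the large part, so no choice of your ``three split ratios'' exceeds $1/4$. The paper's construction is not nested: it takes three parts $U_1,U_2,U_3$ of sizes $1/8,1/4,5/8$, labels all edges inside $U_1\cup U_2$ by $1$, edges inside $U_3$ by $3$, and $U_i$--$U_3$ edges by $i$ for $i\in\{1,2\}$, and then verifies by a direct (and somewhat lengthy) optimisation over the possible matching profiles that every perfect matching leaves at least $3/8$ of the edges critical.
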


\noindent The rough idea of the proof of Theorem~\ref{thm:l-labels} is to find a matching such that 
\begin{enumerate}
\item there are no outward critical edges, 
\item at most half of those edges are critical that link matching edges with different labels, and
\item significantly less than half of those edges are critical which link matching edges of the same label.
\end{enumerate}
It is not surprising that if all three goals are fulfilled, then the critical edge ratio is pushed below $1/2$ by some fixed constant (depending on $\ell$).
The next lemma is the crucial tool to achieve the third goal. 

\begin{lemma}\label{lem:alter}
Let $k\in \N$ be a fixed number. 
For an $x \in \N$, consider the graph on $2x$ vertices with $x$ disjoint edges, colored by red.
Let $\beta_k(x)$ be the largest number of blue edges that can be added to the red perfect matching in the graph so that 
\begin{itemize}
    \item there are no alternating cycles, and 
    \item there are no alternating paths containing at least $k$ blue edges. 
\end{itemize}
Then $\beta_k(x)= (1-1/k)x^2+O_k(x)$ as $x\rightarrow \infty$. 
\end{lemma}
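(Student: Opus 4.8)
The plan is to attack the upper and lower bounds separately. For the lower bound $\beta_k(x) \ge (1-1/k)x^2 + O_k(x)$, I would give an explicit construction: partition the $x$ red edges into $k$ groups $V_1,\ldots,V_k$, each of size roughly $x/k$ (thinking of each group as a set of red edges, or equivalently a set of $2x/k$ endpoints), and add \emph{all} possible blue edges between distinct groups $V_i$ and $V_j$ with $i\neq j$ while adding \emph{no} blue edge inside any group. This gives $\binom{k}{2}(x/k)^2 \cdot (\text{constant}) = (1-1/k)x^2/2$-ish blue edges connecting groups — I need to be careful with the counting, since between two groups of $x/k$ red edges there are on the order of $(2x/k)^2$ vertex pairs but we should only count pairs not already red, giving about $\binom{k}{2}(2x/k)^2/\text{(symmetry factor)}$; the leading term works out to $(1-1/k)x^2$ after accounting for the fact that a blue ``edge'' here is a pair of vertices. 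The point is that any alternating walk (red–blue–red–blue–$\cdots$) must change groups every time it traverses a blue edge and stay within a group when traversing a red edge (since red edges are inside groups) — wait, that forces the walk to change group at each blue step, so after $k$ blue edges it would have to revisit a group, but revisiting a group does not immediately create a cycle. I would instead make the inter-group blue graph ``transitive-like'': linearly order the groups and only add blue edges from $V_i$ to $V_j$ for $i<j$ is not symmetric, so instead orient via a different device — add blue edges between $V_i$ and $V_j$ only when they are \emph{consecutive-ish}, or better, use the structure that an alternating path alternates red (within a group) and blue (between groups), so the sequence of groups visited changes at each blue edge; to forbid $k$ blue edges I want the ``group graph'' to have no walk of length $k$, i.e.\ to be a graph whose longest walk is short, which for a complete graph on $k$ vertices fails. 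The fix is to take the group adjacency graph to be a transitive tournament / DAG: order $V_1 < \cdots < V_k$ and add blue edges between $V_i,V_j$ ($i\ne j$) but record that an alternating path, because of the red edges, must move monotonically — this needs the red matching inside each group to interact correctly. I expect the clean construction is: groups $V_1,\ldots,V_k$, and inside the path the blue edges force a strictly increasing sequence of group indices, so at most $k-1$ blue edges, no $k$, and no cycle either (cycles would need to return). Getting this monotonicity rigorously is the first technical point.

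For the upper bound $\beta_k(x) \le (1-1/k)x^2 + O_k(x)$, I would argue by contradiction / extremality. Suppose $B$ is a blue graph on the $x$ red edges with no alternating cycle and no alternating path with $\ge k$ blue edges. Form an auxiliary directed structure: the ``alternating reachability'' relation where red edge $e$ reaches red edge $f$ if there is an alternating path from an endpoint of $e$ to an endpoint of $f$ starting and ending with blue edges (or a suitable variant). The no-alternating-cycle condition should make this relation acyclic, hence it induces a partial order or a layering of the red edges into levels $L_1, L_2, \ldots$ according to the longest alternating path reaching them; the no-long-path condition caps the number of levels at roughly $k$. Blue edges can only go ``between consecutive levels'' or in a controlled way with respect to this layering (a blue edge joining two red edges in the same level, or spanning levels the wrong way, would extend some alternating path or close a cycle). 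Then the number of blue edges is at most the number of pairs of red edges that lie in distinct levels, which over a partition of $x$ red edges into $k$ classes is maximized by equal class sizes, giving $\le \binom{k}{2}(x/k)^2 \cdot c = (1-1/k)x^2/2 \cdot c$ up to lower-order terms; matching constants with the lower bound construction pins down the coefficient. The subtlety is that the layering is not perfectly clean — a blue edge within a single level need not immediately create a forbidden configuration — so I would need to show that the blue edges \emph{within} levels number only $O_k(x)$, perhaps by a secondary induction on $k$ (the within-level blue graph, restricted appropriately, itself has no long alternating path, with a smaller parameter, and by strong induction contributes a strictly smaller quadratic term, which when summed over $O_k(1)$ levels is absorbed — actually that would still be quadratic, so this needs more care).

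The main obstacle I anticipate is precisely controlling these ``same-level'' or ``off-diagonal'' blue edges in the upper bound and, symmetrically, verifying that the extremal construction in the lower bound truly avoids alternating paths with $k$ blue edges rather than merely $k$ \emph{distinct-group} blue edges. Both hinge on a clean combinatorial lemma of the form: \emph{in a red perfect matching plus blue graph with no alternating cycle, the red edges admit a partition into ordered classes such that every blue edge respects the order and the number of classes is at most (longest alternating blue-path length)$+O(1)$, with at most $O_k(x)$ blue edges not strictly respecting the partition.} Once that structural statement is in hand, the extremal count $(1-1/k)x^2$ follows from optimizing the class sizes, and the $O_k(x)$ error term collects all the boundary and within-class contributions. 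I would spend most of the proof establishing that structural decomposition carefully, likely by induction on $k$, defining the decomposition via longest-alternating-path potentials and checking that a violating blue edge produces either a $k$-blue-edge alternating path or an alternating cycle.
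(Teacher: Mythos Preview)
Both halves of your plan have genuine gaps.

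\textbf{Lower bound.} Your ``$k$ groups, all inter-group blue edges'' construction already contains alternating $4$-cycles: if $e=\{a,b\}\in V_1$ and $f=\{c,d\}\in V_2$ are red edges, then $a\text{--}c\text{--}d\text{--}b\text{--}a$ alternates blue/red/blue/red. You noticed the long-path issue but not this one. The monotonicity fix you sketch (``blue edges force a strictly increasing sequence of group indices'') is not yet a construction, and the obvious implementations fall short of the constant: e.g.\ with $k$ groups, a left/right split of each red edge, and blue only from right-of-$V_i$ to left-of-$V_j$ for $i<j$, you avoid cycles and long paths but get only $\binom{k}{2}(x/k)^2=\tfrac12(1-1/k)x^2$ blue edges --- half of what is needed. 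The paper's construction is genuinely different: for even $k$ it uses only $k/2$ groups, splits each red edge into a left and a right endpoint, takes \emph{all} left--left pairs blue (contributing $\sim x^2/2$), and adds right--left edges only when the right vertex sits in a higher-indexed group. The big left--left clique is the source of the missing $x^2/2$, and one has to argue that it does not create a long path because any right vertex on the path can only descend in group index.

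\textbf{Upper bound.} The layering idea breaks at the point you flagged: blue edges ``within a level'' are \emph{not} $O_k(x)$. Already for $k=2$ the extremal example (one endpoint of each red edge declared ``left'', all left vertices joined blue) has $\sim x^2/2$ blue edges and every red edge sits at the same level, so the within-level count is the entire quadratic main term. No refinement of the level function fixes this, because the structure simply is not a $k$-layered partition with thin layers. The paper's argument is an induction in steps of two: take a longest alternating path $P$ (with $k-1$ blue edges), observe its endpoints have blue degree $\le k-1$, let $D$ be the set of red edges with at least one endpoint of blue degree $\le k-1$ (say $|D|=d$ red edges), and delete $D$. The key claim is that the remaining graph has no alternating path with $k-2$ blue edges: if it did, each endpoint of that path would still have blue degree $\ge k$ in the original graph, hence $\ge 2$ blue neighbours in $D$, and one can splice one such neighbour onto each end to obtain $k$ blue edges in $G$. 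So the remainder satisfies the hypothesis for $k-2$ on $x-d$ red edges; adding back at most $(k-1)d$ edges from the low-degree side and at most $\binom{d}{2}+2d(x-d)$ from the other side of $D$, and optimising over $d$, gives $\beta_k(x)\le(1-1/k)x^2+O_k(x)$. This ``peel and drop by two'' mechanism is the missing idea.
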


\begin{proof}
For the lower estimate, we show two different constructions: one for even and one for odd $k$. 
If $k$ is even, let us partition the set of red edges into $k/2$ subsets of roughly equal size: that is, the cardinality of any two should differ by at most 1. 
The possible differences between these sets only yield an $O(x)$ error, so we do the calculation assuming that each set contains $2x/k$ red edges. 
The set of vertices covered by these sets of edges are $X_1, \ldots, X_{k/2}$, each containing $4x/k$ vertices. 
From each pair of vertices that are red neighbors, we pick one and call it the left vertex of the edge; the other one is the right vertex of the edge. 
Thus in $X_i$ there are $2x/k$ left vertices and $2x/k$ right vertices. 
The left vertices are all linked by blue edges, contributing $\binom{x}{2}=x^2/2+O(x)$ blue edges. 
Two right vertices are never blue neighbors. 
A right vertex $u$ is linked to a left vertex $v$ iff the index of the set containing $u$ is larger than that of $v$. 
See the left picture in Figure~\ref{fig:evenk} for an illustration. 

There are $\binom{k/2}{2}=k(k-2)/8$ pairs $(i,j)$ with $1\leq i<j\leq k/2$, and for all such pairs there are $4x^2/k^2$ left-right blue edges between $X_i$ and $X_j$, contributing $(1/2-1/k)x^2$ blue edges. 
Thus there are $(1-1/k)x^2+O(x)$ blue edges altogether. 

\begin{figure}
\begin{center}
\includegraphics[scale=0.5]{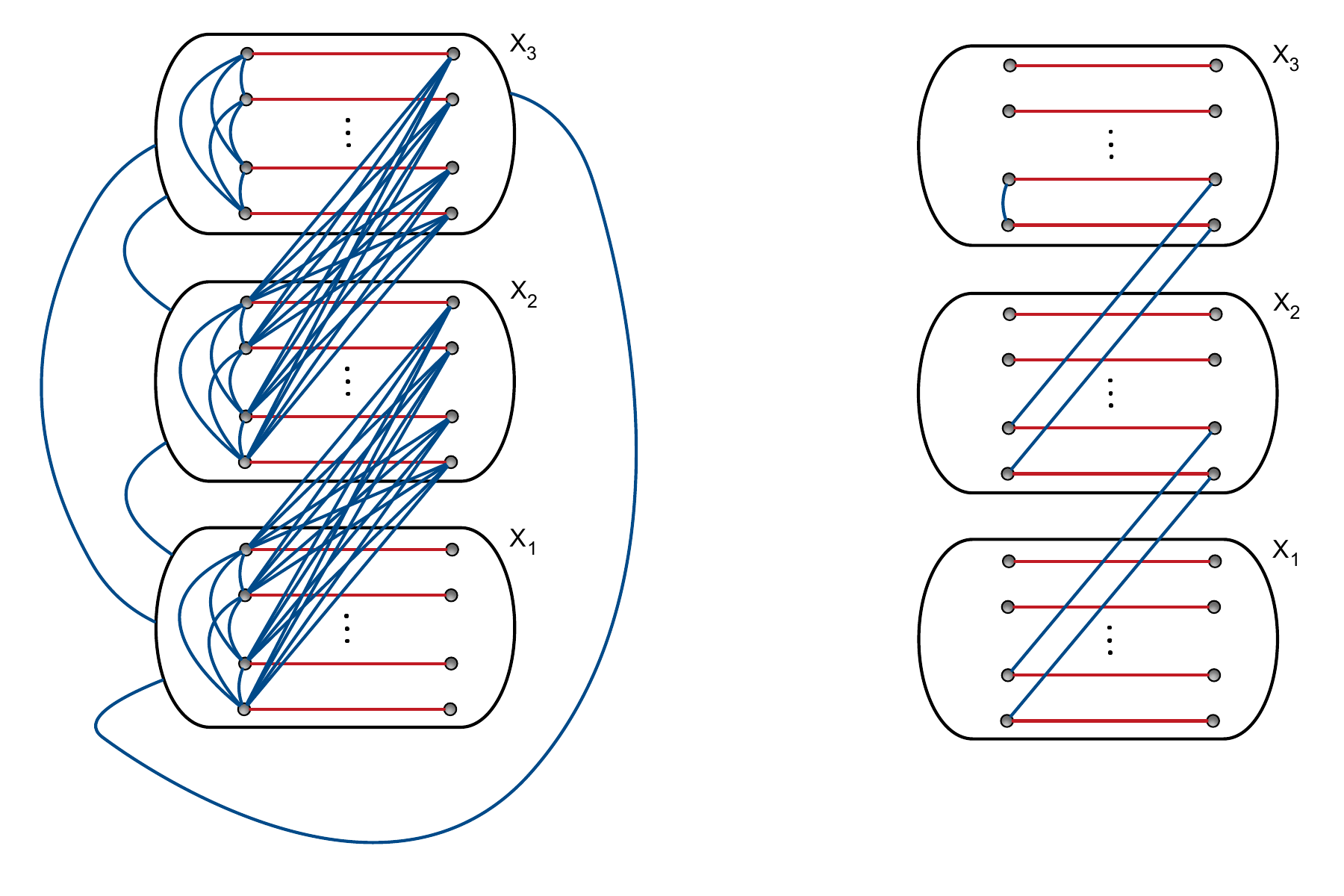}
\end{center}
\caption{Left: construction for k=6. Right: an alternating path containing the most blue edges.}
\end{figure}\label{fig:evenk}

Assuming there is an alternating cycle in this graph, let us pick a red edge in that cycle, and consider its right endpoint. 
The red edge must be followed by a blue one, and right vertices are only linked to left vertices in the graph by blue edges. 
Moreover, this blue neighbor of the right vertex is in a lower-index set. 
Thus, the next vertex in the cycle must be on the left, and in a lower-index set. 
Then we have to follow up by the only available red edge, which means that we move to the right side once again. 
Hence, vertices in any alternating cycle alternate between the left and right side, and the right vertices along the cycle are in sets of descending index. 
Such a descending walk cannot be circular, a contradiction.

The longest alternating path containing the largest number of blue edges starts off at the right of $X_1$, followed by the red pair of this vertex, then crosses to the right side of $X_2$, followed by the red pair of that vertex, etc. 
When we enter the right side of $X_{k/2}$, we move to the pair of that vertex. 
Then we can pick any other vertex at the left side of $X_{k/2}$, as there are blue edges between left vertices. 
Then we have to move to the red neighbor of the last vertex, and walk backwards in a similar zig-zag fashion until arriving at the right side of $X_1$. 
There are $2(k/2-1)$ cross edges between left and right in such a path, and one more blue edge in $X_{k/2}$, which is $k-1$ blue edges altogether. 
See the right picture in Figure~1 for an illustration. 

The construction for odd $k$ is somewhat more roundabout. 
This time there are $(k+1)/2$ sets $X_1, \ldots, X_{(k-1)/2}, X_{(k+1)/2}$, and the last one is half as big as the rest. That is, the number of vertices in $X_i$ is $2x_i$, where $x_1=\cdots=x_{(k-1)/2}=2x/k$ and $x_{(k+1)/2}=x/k$. 
Otherwise, the construction is the same, except that there are no blue edges linking two left vertices of $X_{(k+1)/2}$. 
The argument that this graph contains no alternating cycle is the same as before. 
A longest path can still make its way from $X_1$ up to $X_{(k+1)/2}$ in a zig-zag motion, but it must turn back immediately without gaining an edge on the left side in $X_{(k+1)/2}$, as there are no blue edges linking left vertices of $X_{(k+1)/2}$. 
That is, when we reach a left vertex in $X_{(k+1)/2}$, the best we can do is to drop down to the left of $X_{(k-1)/2}$, and zig-zag all the way down to $X_1$. 
Hence, the most blue edges in an alternating path is $2(k-1)/2=k-1$. 
The union of the first $X_{(k-1)/2}$ sets is the same as the construction for the even number $k-1$ on $(k-1)x/k$ red edges, thus there are $(1-1/(k-1))((k-1)x/k)^2+O(x)=(k-2)(k-1)x^2/k^2+O(x)$ blue edges in that induced subgraph. 
In addition, all $2x/k$ vertices in $X_{(k+1)/2}$ have $(k-1)x/k$ blue neighbors, contributing $2(k-1)x^2/k^2$ blue edges. 
Thus there are $((k-2)(k-1)+2(k-1))x^2/k^2+O(x)=(1-1/k)x^2+O(x)$ blue edges in this graph. 

We prove the upper estimate by induction on $k$. 
Clearly, $\beta_1(x)=0$, which is consistent with the formula for $k=1$. 
If $k=2$, then there cannot be any red edges both of whose endpoints have blue degree at least 2. 
Indeed, if there were such a red edge, then we could match the two endpoints with different blue neighbors, yielding an alternating path with two blue edges. 
Thus there are $O(x)$ blue edges incident to vertices with blue degree at most 1, and every red edge contains such a vertex. 
At worst, all other vertices are linked by blue edges, which yields $\binom{x}{2}+O(x)=x^2/2+O(x)$ blue edges altogether, consistently with the formula for $k=2$. 
Let $k\geq 3$ and assume that the assertion holds for all smaller values of $k$. 

Let $G$ be a graph with maximum number $\beta_k(x)$ of blue edges satisfying the requirements. 
If an alternating path in $G$ ends in a blue edge, we can always extend it by the red edge incident to its last vertex. 
The only obstruction to the addition of this edge would be if the other endpoint of the red edge coincided with the starting vertex of the path. 
However, that would yield an alternating cycle. 
Hence, alternating paths with the most blue edges in them are exactly the longest alternating paths in $G$ (after adding red edges at the end, if necessary). 
Let $P$ be a longest alternating path in $G$. 
We may assume that $P$ contains $k-1$ blue edges, otherwise the formula for $\beta_{k-1}(x)$ would apply, yielding at least as strong an upper estimate as the one claimed for $\beta_k(x)$. 
In particular, there are $k$ red edges in $P$. 

Each endpoint $u$ of $P$ has all blue neighbors in $P$, as otherwise the path could be extended by a blue edge. 
The red neighbor of $u$, that is, the second vertex in the path starting from $u$, cannot be a blue neighbor of $u$. 
In any other red edge contained in the path there is a vertex $v$ such that if $uv$ were a blue edge, then it would form an alternating cycle together with the segment of $P$ from $u$ to $v$. 
Hence, each endpoint of $P$ has blue degree at most $k-1$ in $G$. 

Let $D$ be the set of vertices in $G$ that are incident to a red edge $e$ which has at least one endpoint of blue degree at most $k-1$. 
Let $d$ be the number of red edges in $D$. 
Delete the $2d$ vertices of $D$ from $G$ to obtain the graph $G'$. 
Let $P'$ be a longest path in $G'$. 
By repeating the same argument as above, $G'$ starts and ends in a red edge. 
As we deleted all vertices from $G$ that could be endpoints of longest paths, $P'$ contains at most $k-2$ blue edges. 

\begin{figure}
\begin{center}
\includegraphics[scale=0.55]{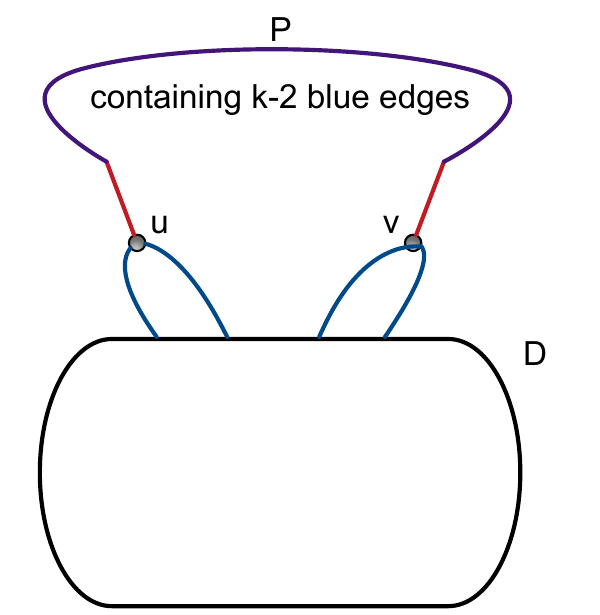}
\end{center}
\caption{A path with many blue edges after deleting $D$.}
\end{figure}\label{fig:longestpath} 

Seeking for a contradiction, assume that $P'$ contains exactly $k-2$ blue edges. 
We can repeat the above argument to show that any of the two endpoints $u,v$ of $P'$ has blue degree at most $k-2$ in $G'$. 
As $u,v\notin D$, they both have blue degree at least $k$ in $G$. 
Thus both $u$ and $v$ are linked to at least two points in the deleted set $D$ of vertices by blue edges of $G$; see Figure~2. 
Let $w\in D$ be such a vertex in $D$ linked to $u$ by a blue edge. 
Out of the at least two blue neighbors of $v$ in $D$, there must be at least one vertex $z\neq w$. 
Then we can extend the path $P'$ by the blue edges $uw$ and $vz$ to obtain an alternating path with $k$ blue edges in $G$, a contradiction. 

Hence, there is no alternating path in $G'$ containing at least $k-2$ blue edges. 
Clearly, there is also no alternating cycle in $G'$, as that would be an alternating cycle in $G$. 
Thus the conditions of the lemma apply to $G'$ with fixed constant $k-2$. 
Therefore, there are at most $\beta_{k-2}(x-d)$ blue edges in $G'$. 
One endpoint of every red edge in $D$ contributes at most $k-1$ further blue edges; this is at most $d(k-1)$ blue edges altogether, which we are simply going to estimate by $kx$ from above. 
Not counting these blue edges again, the remaining $d$ points in $D$ can only be linked to each other and to the $2x-2d$ vertices in $G'$, contributing at most $\binom{d}{2}+2d(x-d)\leq 2dx-\frac{3}{2}d^2$ further blue edges. 
Hence, 
$$\beta_k(x)\leq \beta_{k-2}(x-d) + 2xd - \frac{3}{2}d^2 +kx$$
for some $0\leq d\leq x$. 
By the induction hypothesis, there is a $c_{k-2}\in \R$ such that $\beta_{k-2}(x-d)\leq (1-1/(k-2))(x-d)^2+c_{k-2}(x-d)$. 
Thus 
$$\beta_k(x)\leq (1-1/(k-2))(x-d)^2+c_{k-2}(x-d) + 2xd - \frac{3}{2}d^2 +kx\leq$$
$$\frac{k-3}{k-2}(x-d)^2 + 2xd - \frac{3}{2}d^2 + (c_{k-2}+k)x$$
for some $0\leq d\leq x$. 
The derivative of this quadratic function with respect to the variable $d$ is $\frac{2k-6}{k-2}(d-x)+2x-3d=\frac{2}{k-2}x-\frac{k}{k-2}d$, thus the maximum is attained at $d=2x/k$; cf. the constructions for the lower bound. 
By substituting $d=2x/k$ into the expression, we obtain the upper bound  
$$\beta_k(x)\leq \frac{k-3}{k-2}\frac{(k-2)^2}{k^2}x^2+4x^2/k-6x^2/k^2+O_k(x)=$$
$$\left(\frac{k^2-5k+6}{k^2}+\frac{4k}{k^2}-\frac{6}{k^2}\right)x^2+O_k(x)=(1-1/k)x^2+O_k(x).$$
\end{proof}

We need a final technical lemma before proving the main result of this section, Theorem~\ref{thm:l-labels}.  

\begin{lemma}\label{lem:weights}
Let $K_N$ be a labeled complete graph with $\ell$ labels, and let $\mathcal M$ be a matching of size $M$. 
Denote by $X_t$ the vertices covered by matching edges with label $t$. 
Let $|X_t|=2x_t$. 
Assume that there are no outward critical edges, at most half of the edges are critical between different $X_i$ and $X_j$, and within each $X_t$ there are at most $(1-1/k_t)x_t^2+O_\ell(x_t)$ critical edges for some $k_t\geq 1$. 
Let $S=\sum\limits_{t=1}^\ell k_t$. 
Then the number of critical edges is at most $\left(\frac{1}{2}-\frac{1}{2S}\right)\binom{2M}{2}+O_\ell(M)$.
\end{lemma}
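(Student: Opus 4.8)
The plan is to set up the counting as a constrained optimization over the sizes $x_1,\dots,x_\ell$ and then optimize. First I would count the critical edges by splitting $E(K_N)$ restricted to the matched vertices into the ``diagonal'' blocks (edges inside a single $X_t$) and the ``off-diagonal'' blocks (edges between $X_i$ and $X_j$ with $i\neq j$). There are no outward critical edges by hypothesis, so every critical edge lies in one of these blocks. The off-diagonal block between $X_i$ and $X_j$ contains $4x_ix_j$ edges, of which at most half, i.e.\ $2x_ix_j$, are critical. The diagonal block of $X_t$ has $\binom{2x_t}{2}=2x_t^2+O(x_t)$ edges, of which at most $(1-1/k_t)x_t^2+O_\ell(x_t)$ are critical. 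Summing, the number of critical edges is at most
\[
\sum_{t=1}^{\ell}\Bigl(1-\tfrac1{k_t}\Bigr)x_t^2 \;+\; \sum_{i<j} 2x_ix_j \;+\; O_\ell(M)
\;=\;\Bigl(\sum_t x_t\Bigr)^{\!2} \;-\; \sum_t \tfrac{x_t^2}{k_t}\;+\;O_\ell(M),
\]
using $\bigl(\sum_t x_t\bigr)^2=\sum_t x_t^2+2\sum_{i<j}x_ix_j$. Since $\sum_t 2x_t=2M$, i.e.\ $\sum_t x_t=M$, the first term is exactly $M^2$.

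Next I would bound $\sum_t x_t^2/k_t$ from below subject to $\sum_t x_t=M$. By Cauchy--Schwarz (or Lagrange multipliers / convexity of $u\mapsto u^2$ weighted appropriately),
\[
\sum_{t=1}^{\ell}\frac{x_t^2}{k_t}\;\ge\;\frac{\bigl(\sum_t x_t\bigr)^2}{\sum_t k_t}\;=\;\frac{M^2}{S},
\]
with equality when $x_t$ is proportional to $k_t$. This is precisely the shape of the constructions used in the lower-bound arguments, so the bound is the right one. Substituting, the number of critical edges is at most $M^2-M^2/S+O_\ell(M)=\bigl(1-\tfrac1S\bigr)M^2+O_\ell(M)$. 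Finally, since $\binom{2M}{2}=2M^2+O(M)$, we have $M^2=\tfrac12\binom{2M}{2}+O(M)$, so this equals $\bigl(\tfrac12-\tfrac1{2S}\bigr)\binom{2M}{2}+O_\ell(M)$, as claimed. Here I am absorbing the $O_\ell(M)$ and $O_\ell(x_t)\le O_\ell(M)$ error terms throughout; $\ell$ is fixed so this is harmless.

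I do not anticipate a serious obstacle here: the lemma is essentially a bookkeeping step that packages the three structural properties (no outward critical edges, half-density off-diagonal, the $(1-1/k_t)$ diagonal bound from Lemma~\ref{lem:alter}) into a single clean inequality. The only point requiring a little care is the handling of error terms --- one must check that the per-block $O_\ell(x_t)$ errors, of which there are $\ell$, and the $O(x_t)$ corrections from $\binom{2x_t}{2}$ versus $2x_t^2$, all sum to $O_\ell(M)$; this follows since each $x_t\le M$ and the number of blocks is bounded in terms of $\ell$. A secondary subtlety is that the Cauchy--Schwarz step should be applied even when some $k_t=1$ (a valid choice), and when $x_t=0$ for some $t$ (an empty label class) --- both are fine, as the inequality degrades gracefully.
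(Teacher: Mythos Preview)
Your proposal is correct and takes essentially the same approach as the paper: both reduce the count to $M^2-\sum_t x_t^2/k_t+O_\ell(M)$ via the identity $(\sum_t x_t)^2=\sum_t x_t^2+2\sum_{i<j}x_ix_j$, and then minimize $\sum_t x_t^2/k_t$ subject to $\sum_t x_t=M$. The only cosmetic difference is that the paper invokes Lagrange multipliers to locate the optimum at $x_t=Mk_t/S$, whereas you use Cauchy--Schwarz directly; both give the same bound $M^2/S$ with the same equality case.
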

\begin{proof}
The errors add up to $O_\ell(n)$, so we disregard them. 
We need to solve the following conditional optimization problem: 
Under the conditions $0\leq x_t$ for all $t$ and $x_1+x_2+\cdots +x_\ell=M$, find the maximum of 
$$\sum\limits_{1\leq i<j\leq \ell} 2x_ix_j + \sum\limits_{t=1}^{\ell} (1-1/k_t)x_t^2.$$
Note that $\sum\limits_{1\leq i<j\leq \ell} 2x_ix_j= \sum\limits_{i\neq j} x_ix_j = \sum\limits_{t=1}^{\ell} x_t(M-x_t)= M\cdot\sum\limits_{t=1}^{\ell} x_t -\sum\limits_{t=1}^{\ell} x_t^2= M^2-\sum\limits_{t=1}^{\ell} x_t^2$. 
This observation leads to a simplified equivalent formulation of our task: 
Under the conditions $0\leq x_t$ for all $t$ and $x_1+x_2+\cdots +x_\ell=M$, find the maximum of $$M^2- \sum\limits_{t=1}^{\ell} x_t^2/k_t.$$
An application of Lagrange multipliers shows that the maximum is attained at $x_t=M k_t/S$. 
This yields the optimum 
$$M^2-\sum\limits_{t=1}^{\ell} \big(M^2k_t^2/S^2\big)/k_t = M^2\bigg(1 -\sum\limits_{t=1}^{\ell} k_t/S^2\bigg)= M^2(1 - 1/S) = \left(\frac{1}{2}-\frac{1}{2S}\right)\binom{2M}{2} + O(M).$$
\end{proof}

Lemma~\ref{lem:alter} and Lemma~\ref{lem:weights} together outline the following strategy to estimate $\gamma(\ell)$ from above. 
Assume that given any $\ell$-labeling of $K_N$, we can find a (red) matching $\mathcal{M}$ of size $M$ with no outward critical (blue) edges such that at most half of the edges between different $X_i$ and $X_j$ are critical, and within an $X_t$, there is no alternating cycle and no alternating path with $k_t$ blue edges. 
Then $\gamma(\ell)\leq \frac{1}{2}-\frac{1}{2S}$, where $S=\sum\limits_{t=1}^{\ell} k_t$. 
The idea is similar to that in the proof of Proposition~\ref{prop:linfty}. 
The matching $\mathcal{M}$ is defined as a minimal element of a carefully chosen (quasi-)order. 
If this order is defined properly, then it is immediate that there is no outward critical edge and no $\mathcal{M}$-pair of critical edges $e,e'$, as the obvious switching operation would yield a smaller element in the order. 
The novelty to the proof of Proposition~\ref{prop:linfty} is the treatment of critical edges within each $X_t$, so that we can apply Lemma~\ref{lem:alter} and Lemma~\ref{lem:weights}. 
To this end, a more complicated order is defined than that in the proof of Proposition~\ref{prop:linfty}. 

\begin{proof}[Proof of Theorem~\ref{thm:l-labels}]
Let $\ell, M,N\in \mathbb{N}$ be such that $2M\leq N$. 
Let $\lambda: E(K_N)\rightarrow \{1,2,\ldots,\ell\}$ be a labeling of the edges of a complete 
graph on $N$ vertices. 
We assign weights to the edges, depending on their label. 
Label 1 edges have weight 0, and label 2 edges have weight 1. 
Then for a small $\eps>0$, the weight assigned to label 3 edges is $2+\eps$, to label 4 edges it is $4+2\eps+\eps^2$, etc. 
In general, 
\begin{itemize}
\item label 1 edges have weight 0, and 
\item for $t\geq 2$, label $t$ edges have weight $\sum\limits_{s=0}^{t-2} 2^{t-2-s}\eps^s$. 
\end{itemize}
Let $\mathcal M$ be a matching of size $M$ with minimum total weight. 
Let $X_t$ be the set of vertices covered by the $x_t$ edges of label $t$ in $\mathcal M$. 
Then putting $|X_t|=2x_t$ we have $x_1+x_2+\cdots +x_\ell=M$. 

We use the terminology introduced before Proposition~\ref{prop:linfty}. 
There is no outward critical edge $e$ (incident with a matching edge $m\in \mathcal M$), as switching $m$ to $e$ would yield a matching with smaller total weight. 
There is no $\mathcal M$-pair of critical edges $e,e'$ between different $X_t$. 
Indeed, the weights are non-negative, monotone increasing, and the weight assigned to each label is more than twice the weight assigned to the previous one. 
Hence, the sum of the weights of $e$ and $e'$ would be strictly less than that of the two matching edges covering the same quadruple of vertices, and then the $e$-switch would decrease the total weight. 
Thus at most half of the edges running between different $X_t$ are critical. 

We now estimate the number of critical edges in each $X_t$. 
For all $\ell\geq 2$, we define a vector $c_\ell$ of length $\ell$. 
For small values of $\ell$ these vectors are $c_2=(1,1)$, $c_3=(1,2,1)$, $c_4=(1,4,2,1)$, $c_5=(1,8,6,2,1)$, $c_6=(1,16,14,6,2,1)$. 
The precise definition is 
\begin{itemize}
\item $c_2=(1,1)$, and
\item for all $\ell\geq 3$, $c_\ell[1]=c_\ell[\ell]=1$, $c_\ell[2]=2^{\ell-2}$, and for all $3\leq t \leq \ell-1$ we have $c_\ell[t]=2^{\ell-t+1}-2$. 
\end{itemize}
Our goal is to show that the number of critical edges in $X_t$ is at most $(1-1/c_\ell[t])x_t^2+O_{\ell}(x_t)$, so that we can apply Lemma~\ref{lem:weights}. 
According to Lemma~\ref{lem:alter}, it is enough to show that if we restrict the matching $\mathcal M$ to $X_t$ (red edges), and color edges of label less than $t$ blue, then there is no alternating cycle and there is no alternating path with $c_\ell[t]$ blue edges in this red and blue subgraph with vertex set $X_t$. 

Clearly, there is no alternating cycle in this subgraph: by switching the red edges of that cycle in $\mathcal M$ to the blue edges of that cycle, we would decrease the total weight of the perfect matching. 
For $t=1$, there cannot be a critical edge in $X_t$ because there is no label less than 1. 
For $t=\ell$, again, there cannot be a critical edge $e$ in $X_t$ because the $e$-switch would decrease the total weight. 
Hence, $c_\ell[1]=c_\ell[\ell]=1$ is justified: there is no alternating path with $1$ blue edge either in $X_1$ or in $X_\ell$. 
For $t=2$, an alternating path with $2^{\ell-2}$ blue edges would have $2^{\ell-2}+1$ red edges. 
The total weight of red edges in such a path is $2^{\ell-2}+1$. 
We propose to switch these red edges in $\mathcal M$ to the blue ones together with the edge linking the endpoints of the path. 
At worst, the endpoints are linked by a label $\ell$ edge. 
As all blue edges have label 1, and consequently weight 0, the total weight of these $2^{\ell-2}+1$ edges is the weight of the label $\ell$ edge, that is, $\sum\limits_{s=0}^{\ell-2} 2^{\ell-2-s}\eps^s$. 
If $\eps$ is small enough, then $\sum\limits_{s=0}^{\ell-2} 2^{\ell-2-s}\eps^s < 2^{\ell-2}+1$. 
Hence, the switch along this cycle (the path together with the edge linking the ends) decreases the total weight of the matching. 
Thus there cannot be an alternating path with $2^{\ell-2}$ blue edges in $X_2$, justifying the formula $c_\ell[2]=2^{\ell-2}$. 
Finally, for $3\leq t\leq \ell-1$, we proceed in a similar fashion: assume that there is an alternating path with $c_\ell[t]=2^{\ell-t+1}-2$ blue edges in $X_t$. 
Such a path contains $2^{\ell-t+1}-1$ red edges and $2^{\ell-t+1}-2$ blue edges. 
The total weight of the red edges is 
$$(2^{\ell-t+1}-1)\sum\limits_{s=0}^{t-2} 2^{t-2-s}\eps^s= \sum\limits_{s=0}^{t-2} (2^{\ell-1-s}-2^{t-2-s})\eps^s.$$ 
The total weight of the blue edges together with the edge linking the endpoints is largest if all blue edges have label $t-1$ and the added edge has label $\ell$. 
If this is the case, then the total weight is $$\sum\limits_{s=0}^{\ell-2} 2^{\ell-2-s}\eps^s+(2^{\ell-t+1}-2)\sum\limits_{s=0}^{t-3} 2^{t-3-s}\eps^s = \sum\limits_{s=0}^{t-3} (2^{\ell-1-s}-2^{t-2-s})\eps^s + \sum\limits_{s=t-2}^{\ell-2}2^{\ell-2-s}\eps^s.$$ 
The coefficients of $\eps^s$ in the two sums coincide for $0\leq s\leq t-3$. 
The first difference occurs for $s=t-2$: in the red sum, the coefficient of $\eps^{t-2}$ is $2^{\ell-t+1}-1$, and in the ``blue'' sum it is $2^{\ell-t}$. 
The latter is always smaller than the former as $t\leq \ell-1$. 
Thus for a small enough $\eps$ we could once again improve the total weight of the matching, a contradiction. 
Note that for any fixed $\ell$, only finitely many requirements were made for $\eps$, and all of them hold on an open interval with left endpoint zero and a positive right endpoint. 
Hence, for each $\ell\in \N$ there is a small enough $\eps=\eps(\ell)>0$ that meets all requirements. 

Let $S_\ell=\sum\limits_{t=1}^\ell c_\ell[t]$. 
According to Lemma~\ref{lem:weights} we have $\gamma(\ell)\leq \frac{1}{2}-\frac{1}{2S_\ell}$. 
An elementary calculation yields that $S_2=2$ and that for all $l\geq 3$ we have $S_\ell=3\cdot 2^{\ell-2}-2\ell+4$. 
This translates to the upper bounds $\gamma(2)\leq 1/4$ and $\gamma(\ell)\leq 1/2-1/(3\cdot 2^{\ell-1}-4\ell+8)$ for $\ell\geq 3$. 
In particular, $\gamma(3)\leq 3/8$. 

For the lower estimates $\gamma'(2)\geq 1/4$ and $\gamma'(3)\geq 3/8$, we provide two constructions. 
For $\ell=2$, partition the set of vertices of $K_N$ into two subsets $U_1$ and $U_2$, where $|U_1|=N/4$ and $|U_2|=3N/4$. 
Edges in $U_i$ have label $i$, and edges between the two sets have label 1. 
Given a perfect matching $\mathcal M$, let $Nx$ be the number of edges of $\mathcal M$ between $U_1$ and $U_2$. 
Clearly $x\leq 1/4$, and there are $N(1/8-x/2)$ edges of $\mathcal M$ in $U_1$ and $N(3/8-x/2)$ edges of $\mathcal M$ in $U_2$. 
Critical edges must have label 1, thus they have to lie between $U_1$ and $U_2$ such that the endpoint in $U_2$ is covered by one of the $N(3/8-x/2)$ edges of $\mathcal M$ in $U_2$. 
That is, there are $N(3/4-x)$ possibilities for the endpoint in $U_2$, and $N/4$ possibilities for the endpoint in $U_1$. 
Hence, the number of critical edges is $N^2(3/4-x)/4$, which attains its minimum $N^2/8$ at $x=1/4$. 

For $\ell=3$, let us partition the vertex set of $K_n$ into three subsets $U_1, U_2, U_3$ such that $|U_1|=N/8, |U_2|=N/4, |U_3|=5N/8$. 
Edges in $U_1\cup U_2$ are labeled 1 and edges in $U_3$ are labeled 3. 
Edges between $U_i$ and $U_3$ are labeled $i$ for $i=1,2$. 

Let $\mathcal M$ be a perfect matching. 
Let $Nx_{ij}$ be the number of edges in $\mathcal M$ between $U_i$ and $U_j$ for $1\leq i<j\leq 3$. Clearly $0\leq x_{12},x_{13},x_{23}$ and $x_{12}+x_{13}\leq 1/8, x_{12}+x_{23}\leq 1/4$. 

There are $N(1/8-x_{12}-x_{13})$ points in $X_1$ covered by matching edges in $X_1$, $N(1/4-x_{12}-x_{23})$ points in $X_2$ covered by matching edges in $X_2$, and $N(5/8-x_{13}-x_{23})$ points in $X_3$ covered by matching edges in $X_3$. 
Let $Nc_{ij}$ denote the number of vertices in $U_i$ that is incident to an edge in $\mathcal M$ with label $j$. Then 
$$c_{11}=1/8, c_{12}=0, c_{13}=0;$$
$$c_{21}=1/4-x_{23}, c_{22}=x_{23}, c_{23}=0;$$
$$c_{31}=x_{13}, c_{32}=x_{23}, c_{33}=5/8-x_{13}-x_{23}.$$

For simplicity, we count the non-critical edges, and only up to an $o(N^2)$ error. 
All $\frac{25}{128}N^2$ label 3 edges are non-critical. 
All label 2 edges are between $X_2$ and $X_3$. 
Such an edge is non-critical iff both of its endpoints are covered by a label 1 or a label 2 matching edge. 
Hence, there are $(c_{21}+c_{22})(c_{31}+c_{32})N^2=\frac{x_{13}+x_{23}}{4}N^2$ such edges. 
Finally, there are several sources of non-critical label 1 edges. 
There are $c_{11}c_{31}N^2=\frac{x_{13}}{8}N^2$ between $X_1$ and $X_3$, $c_{11}c_{21}N^2=(\frac{1}{32}-\frac{x_{23}}{8})N^2$ between $X_1$ and $X_2$, $\frac{c_{11}^2}{2}N^2=\frac{1}{128}N^2$ in $X_1$, and $\frac{c_{21}^2}{2}N^2=(\frac{1}{32}-\frac{x_{23}}{4}+\frac{x_{23}^2}{2})N^2$ in $X_2$. 
Thus the number of critical edges is 
$$\left(\frac{1}{2}-\frac{25}{128}-\frac{x_{13}+x_{23}}{4}-\frac{x_{13}}{8}-\left(\frac{1}{32}-\frac{x_{23}}{8}\right)-\frac{1}{128}-\left(\frac{1}{32}-\frac{x_{23}}{4}+\frac{x_{23}^2}{2}\right)\right)N^2=$$

$$\left(\frac{15}{64}-\frac{3}{8}x_{13}+\frac{1}{8}x_{23}-\frac{1}{2}x_{23}^2\right)N^2.$$

We need to find the minimum of this function subject to the constraints $0\leq x_{12},x_{13},x_{23}$ and $x_{12}+x_{13}\leq 1/8, x_{12}+x_{23}\leq 1/4$. 
We may assume that $x_{12}=0$, as the function does not depend on $x_{12}$, and it only weakens the constraints to set $x_{12}>0$. 
Thus $0\leq x_{13},x_{23}$ and $x_{13}\leq 1/8, x_{23}\leq 1/4$. 
For a fixed $x_{23}$, it is clearly advantageous to pick the largest possible $x_{13}$, that is, $x_{13}=1/8$. 
Then the revised optimization problem is to find the minimum of $\left(\frac{3}{16}+\frac{1}{8}x_{23}-\frac{1}{2}x_{23}^2\right)N^2 = \left(\frac{3}{16}+\frac{1}{8}x_{23}(1-4x_{23})\right)N^2$ subject to the constraint $0\leq x_{23}\leq 1/4$. 
On this interval, we have $x_{23}(1-4x_{23})\leq 0$ and equality holds iff $x_{23}=0$ or $x_{23}=1/4$. 
Thus the minimum is attained at these two points, and the minimal value of $\left(\frac{3}{16}+\frac{1}{8}x_{23}(1-4x_{23})\right)N^2$ is $\frac{3}{16}N^2\sim \frac{3}{8}\binom{N}{2}$. 
It is easy to see that $x_{12}=0$ is necessary to obtain this optimum, as otherwise we lose by having to set $x_{13}<1/8$. 

Hence, the minimum ratio of critical edges is $3/8$, and it is attained by exactly two different (family of) matchings. 
We can pair up all $N/8$ vertices of $U_1$ with vertices in $U_3$, and match every other vertex within its own $U_i$. 
Alternatively, we can pair up all $N/8$ vertices of $U_1$ with vertices in $U_3$, pair up all $N/4$ vertices of $U_2$ with vertices in $U_3$, and match the remaining vertices of $U_3$ among themselves. 
\end{proof}

The second matching at the end of the proof has the exact structure that is necessary for the upper estimate to be sharp: $|X_1|=|X_3|=N/4, |X_2|=N/2$, exactly half of the edges between different $X_i$ and $X_j$ are critical, and exactly a quarter of edges in $X_2$ are critical (and no other edges). 
The quarter of the edges in $X_2$ that are critical form a clique which contain exactly one endpoint of every matching edge in $X_2$. 
It is somewhat surprising that there is also a completely different matching in the extremal structures that yield the same ratio of critical edges. 
Perhaps this is a symptom of the existence of another proof method for the upper bound, which finds a different matching that coincides with the first one in the extremal structures. 
Finding such a proof might lead to better estimates for $\gamma(\ell)$ when $\ell\geq 4$. 

Another possible way to improve the upper bound for $\ell\geq 4$ is to analyze the structure suggested by the proof. 
It seems that this structure is never optimal. 
We believe that the number $c_\ell[2]=2^{\ell-2}$ could be replaced by $c_\ell[2]=3$ when $\ell\geq 4$. 
In particular, if $\ell= 4$, this would yield the vector $c_4=(1,3,2,1)$ rather than $c_4=(1,4,2,1)$, and the upper estimate $\gamma(4)\leq 3/7\approx0.4286$ (see Lemma~\ref{lem:weights}) rather than the current one $\gamma(4)\leq 7/16=0.4375$ provided by Theorem~\ref{thm:l-labels}. 
The best lower bound we have found is 
$\gamma(4)\geq \gamma'(4)\geq 5/12\approx 0.4167$. 
This is obtained by the following construction. 
Let 
$a_1 = 1/12, a_2 = 2/12, a_3 = 2/12, a_4 = 7/12$. 
Let $N$ be large and consider four sets $A_1, A_2, A_3, A_4$ of size $a_1N, a_2N, a_3N, a_4N$, respectively. 
(Obviously, rounding these numbers to integers does not introduce a significant error.)
Every edge incident to a vertex in $A_1$ or $A_2$ has label 1, except for edges between $A_2$ and $A_4$ which have label $2$. 
Edges in $A_3$ have label 2, those in $A_4$ have label 4, and edges between $A_3$ and $A_4$ have label 3. 
There are two optimal perfect matchings in this labeled graph yielding the above critical edge ratio $5/12$. 
This can be shown by a similar calculation as that at the end of the proof of Theorem~\ref{thm:l-labels}. 

\section{Cliques}\label{sec:cliques}

In this section, we prove Theorem~\ref{thm:main}. 
Let $\delta \in [1,2]$ and $\ell \geq 2$ (including $\infty$). 
Assume that an $\ell$-adaptive algorithm using $n^\delta$ queries finds a clique of size $\alpha\log n$ with high probability. 

We summarize the main strategy, already introduced in \cite{feige2021tight}. 
Given a (non-negative integer) $m\leq \alpha/2$, we encode a subgraph of size at most $\alpha\log n$ by a tuple whose first $M\approx m\log n$ coordinates are in $\{1, \ldots, n^\delta\}$ and the remaining $(\alpha-2m)\log n$ coordinates are $\{1, \ldots, n\}$. 
The first $M$ entries are steps in the process, and the rest are vertices of the graph. 
Such a tuple encodes the subgraph whose vertices are the set $U$ of endpoints of the edges queried in the given $m\log n$ steps together with the remaining $(\alpha-2m)\log n$ vertices. 
In a \emph{good} tuple 
\begin{itemize}
\item the encoded graph has $N=\alpha\log n$ vertices, that is, there is no overlap in the above description of vertices, 
\item every edge was queried at some point during the process in the complete subgraph spanned by these $N$ vertices, and 
\item the $M$ edges are independent, forming a matching $\mathcal M$ in $K_N$ with a minimum number of critical edges, where the label of an edge is the round in which it was queried.
\end{itemize}



\begin{proposition}\label{prop:ineqcl}
Assume that it is possible to find a clique of size $\alpha\log n$ with $\ell$ adaptive rounds and $n^\delta$ queries in $G(n, 1/2)$ for any large enough $n$ with high probability. 
Then for all $0\leq m\leq \alpha/2$ we have
$$\alpha^2/2-\alpha-2\gamma(\ell)m^2+(2-\delta)m\leq 0.$$ 
\end{proposition}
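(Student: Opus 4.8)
The plan is to run a counting (union-bound / second-moment-free) argument: show that if the algorithm succeeds with high probability, then for each fixed $m$ the expected number of ``good'' tuples (in the sense defined just above the proposition) must be at least, say, $1$, and then estimate this expectation from above in terms of $\alpha,\delta,\gamma(\ell),m$. Taking logarithms and dividing by $\log n$ will produce the claimed inequality, with the term $\alpha^2/2$ coming from the number of potential edges in the $N$-clique, $-\alpha$ from the entropy cost of specifying the $N$ vertices, $(2-\delta)m$ from the fact that each of the $M\approx m\log n$ process-steps ranges over $\{1,\dots,n^\delta\}$ while the ``saved'' description of the $2M$ matched vertices would have cost $2m\log n$, and $-2\gamma(\ell)m^2$ from the gain of not having to pay for the critical edges of the matching.

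First I would set $M=\lceil m\log n\rceil$ and $N=\lfloor\alpha\log n\rfloor$ and carefully define the encoding: a good tuple records the first $M$ queries of the run of the algorithm (on the event that it succeeds), whose endpoints together with $N-2M$ further vertices make up a clique found by the algorithm. For a fixed clique $K$ on $N$ vertices and a fixed matching $\mathcal M\subseteq\binom{K}{2}$ of size $M$ realized as the first $M$ queries, the probability (over $G(n,1/2)$ and the algorithm's randomness) that all $\binom N2$ edges of $K$ are present is $2^{-\binom N2}$ — but crucially the queries that the algorithm makes are non-adaptive within a round, so the labels $\lambda(e)\in\{1,\dots,\ell\}$ (the round in which $e$ was queried) are determined, and the algorithm can only have decided to query the later-round edges based on the earlier rounds. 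The key probabilistic observation, exactly as in Feige--Ferster and Alweiss et al., is that a critical edge $uv$ — one queried in an earlier round than some matching edge covering $u$ or $v$ — is ``free'': conditioned on everything up to its round, it is present with probability $1/2$ independently, so we only need to pay $2^{-1}$ for it; whereas for a non-critical edge we can instead absorb its cost into the choice of the matched vertices. Concretely, summing over all choices of $K$ (at most $n^{N}$), all choices of which $M$ of the early queries they are (this is where the $n^{\delta M}$ factor and the $2m\log n$ savings appear), and all matchings, and using that the algorithm succeeds with probability $\to 1$, I would obtain
\begin{equation}
1 \le n^{o(\log n)}\cdot n^{(\alpha-2m)\log n \cdot 0}\cdots
\end{equation}
— more precisely, after taking $\log_2$ and dividing by $(\log n)^2$, the dominant terms are
\begin{equation}
0 \le -\binom N2/(\log n)^2 + \text{(critical edges)}/(\log n)^2 + \alpha\,\delta\,m/? \dots,
\end{equation}
which I would rearrange to $\alpha^2/2-\alpha-2\gamma(\ell)m^2+(2-\delta)m\le 0$. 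The precise bookkeeping: the number of potential edges contributes $-\alpha^2/2$, specifying $N$ vertices contributes $+\alpha$ (each vertex costs $\log n$, i.e. coefficient $1$ after dividing by $\log^2 n$... careful, this needs the $(\log n)$ normalization), the $M$ step-indices contribute $+\delta m$ each but replace $2m$ worth of vertex descriptions so net $-(2-\delta)m$ on the favorable side, and critical edges — of which there are at least $\gamma(\ell)\binom{2M}{2}\approx 2\gamma(\ell)m^2\log^2 n$ by Question~\ref{que:labpri} applied to the labeling $\lambda$ and the minimizing matching $\mathcal M$ — save $+2\gamma(\ell)m^2$; the inequality $\binom{N}{2}\ge (\text{savings})$ is what yields the proposition.

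The main obstacle, and the step I would spend the most care on, is the reduction to the combinatorial quantity $\gamma(\ell)$: one must argue that \emph{for the worst-case labeling that the algorithm's round structure induces}, there still exists a matching $\mathcal M$ of size $M$ among the queried edges with few critical edges, so that the ``free'' edges can be exploited — but the algorithm gets to choose which edges to query in which round (the $\max_\lambda$), while we, in the analysis, get to choose the matching $\mathcal M$ (the $\min_{\mathcal M}$), which is exactly the min-max in Question~\ref{que:labpri}. I would need: (i) that we may restrict attention to matchings contained in the found clique, so $N\ge 2M$ is automatic and Question~\ref{que:labpri} (not the more restrictive Question~\ref{que:lab}) is the relevant one; (ii) a clean statement that, conditioned on the first $M$ matched queries all being edges and on their rounds, each critical edge is present with conditional probability exactly $1/2$ and these events are mutually independent given the earlier rounds — this requires ordering the edges by round and revealing them round-by-round, and noting a critical edge precedes its ``blocking'' matching edge. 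Handling the $\ell=\infty$ case uniformly (where labels are just the order of queries) and the rounding of $m\log n$, $\alpha\log n$ to integers are minor technical points. Finally one checks the degenerate range is exactly the $\ell=2,\ \delta\in[1,6/5]$ regime noted in Theorem~\ref{thm:main}, but that falls out of optimizing over $m$ in a later step, not here.
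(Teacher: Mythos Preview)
Your overall plan is exactly the paper's: encode the found clique by a tuple of $M\approx m\log n$ query indices plus $N-2M$ extra vertices, do a union bound over the $2^{(\alpha-(2-\delta)m)\log^2 n}$ tuples, bound the per-tuple probability by $2^{-(\binom{N}{2}-C)}$, and invoke the definition of $\gamma(\ell)$ to control $C$. Your final bookkeeping also lands on the right inequality.

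However, the heart of the argument --- which edges cost a factor $2^{-1}$ and why --- is stated backwards in your proposal, and this is a genuine gap. The bound $\P(T\text{ good and encodes a clique})\le 2^{-(\binom{N}{2}-C)}$ means we collect a factor $2^{-1}$ for each \emph{non-critical} edge, not for each critical one. The reason: if $e=uv$ is non-critical then $\lambda(e)\ge \lambda(f)$ for every matching edge $f$ incident to $e$, so the identities of $u$ and $v$ (as endpoints of queries $i_{k_u},i_{k_v}$) are already fixed by the results of rounds strictly before $\lambda(e)$; conditioned on that, the outcome of the round-$\lambda(e)$ query at $\{u,v\}$ is a fresh fair coin. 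A \emph{critical} edge $e$, by contrast, is queried in a round strictly before some incident matching edge, so its outcome can influence which pair that later matching query is --- hence which vertex enters $V_T$ --- and we cannot claim any factor for it at all. Your sentence ``conditioned on everything up to its round, it is present with probability $1/2$'' is true of every queried edge and does not distinguish the two cases; the relevant conditioning is on the identity of $V_T$, and that is where criticality matters.

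Consequently, your direction on $C$ is also reversed: since critical edges are the ones we \emph{lose}, we want few of them, and we choose $\mathcal M$ to \emph{minimize} $C$, giving $C\le \gamma(\ell)\binom{2M}{2}+o(M^2)$ --- an upper bound, not the ``at least'' you write. (You correctly identify the $\max_\lambda\min_{\mathcal M}$ roles in your last paragraph, which is inconsistent with the ``at least'' earlier.) The term $-2\gamma(\ell)m^2$ in the inequality thus arises from forfeiting at most $2\gamma(\ell)m^2\log^2 n$ factors of $1/2$, not from gaining that many. If you attempt to carry out the conditioning in your item (ii) as written, it will not go through; swapping critical and non-critical throughout fixes it and yields the paper's proof verbatim.
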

\begin{proof}
The number of possible tuples that encode a subgraph is $$(n^\delta)^{m\log n}\cdot n^{(\alpha-2m)\log n} = 2^{(\alpha+(\delta-2)m)\log^2 n}.$$ 
The probability that a given tuple encodes a clique is at most $2^{-\left(\binom{\alpha\log n}{2}-C\right)}$, where $C$ is the number of critical edges in the complete subgraph $K_N$ spanned by these $N=\alpha\log n$ vertices (with each edge labeled by the round it was queried in). 
As $C\leq \gamma(\ell)\binom{2M}{2}$, this probability is at most 
$$2^{-\left(\binom{\alpha\log n}{2}-C\right)}\leq 2^{-\left(\binom{\alpha\log n}{2}-\gamma(\ell)\binom{2m\log n}{2}\right)} \leq 2^{-(\alpha^2/2-2\gamma(\ell)m^2)\log^2n+O(\log n)}.$$ 
Using the trivial estimate that the probability of a union of events is at most the sum of the probabilities of the events yields 
$$\mathbb{P}(\text{the algorithm finds a clique of size\,\,}\alpha\log n) \leq $$
$$2^{(\alpha+(\delta-2)m)\log^2 n}\cdot 2^{-(\alpha^2/2-2\gamma(\ell)m^2)\log^2n+O(\log n)} =$$
$$2^{-(\alpha^2/2-\alpha-2\gamma(\ell)m^2+(2-\delta)m)\log^2n+O(\log n))}.$$
Hence, if $\alpha^2/2-\alpha-2\gamma(\ell)m^2+(2-\delta)m>0$ for some $0\leq m\leq \alpha/2$, then the above probability would be asymptotically 0, a contradiction. 
\end{proof}

\begin{proof}[Proof of Theorem~\ref{thm:main}]
Given $(\delta,\ell)$, we are looking for the minimum $\alpha$ such that there is a $0\leq m\leq \alpha/2$ that makes the left hand side of the inequality in Proposition~\ref{prop:ineqcl} positive. 
To this end, we first find the maximum of the expression in $m\in [0,\alpha/2]$, and then compute the minimum $\alpha$ that makes the expression positive (non-negative) for that $m$. 
In the fully adaptive case $\ell=\infty$, we have $\gamma(\ell)=1/2$, thus the derivative of the left hand side $f(m)=\alpha^2/2-\alpha-m^2+(2-\delta)m$ is $\frac{\partial}{\partial m}(\alpha^2/2-\alpha-m^2+(2-\delta)m)=-2m+(2-\delta)$, which has a unique root at $m_0=\frac{2-\delta}{2}$. 
Since $\delta\in [1,2[$, as long as $\alpha\geq 1$, the linear function $-2m+(2-\delta)$ is positive at $m=0$ and non-positive at $m=\alpha/2$. 
Hence, under the assumption $\alpha\geq 1$, the unique root  $m_0=\frac{2-\delta}{2}$ is in the interval $[0,\alpha/2]$. 

Substituting $m=m_0$ in the expression yields $f(m_0)=\alpha^2/2-\alpha+(2-\delta)^2/4$. 
The two roots of this quadratic function are $1\pm\sqrt{1-(2-\delta)^2/2}$. 
Thus the maximum $\alpha$ where the expression is non-negative is $\alpha=1+\sqrt{1-(2-\delta)^2/2}$. 
Note that it was justified to substitute $m_0$, as this $\alpha$ is indeed at least 1, therefore $m_0\in [0,\alpha/2]$. 

We argue similarly in the $\ell$-adaptive case when $\ell\in\mathbb{N}$. 
This time $f(m)=\alpha^2/2-\alpha-2\gamma(\ell)m^2+(2-\delta)m$. 
The derivative of this function is $\frac{\partial}{\partial m}(\alpha^2/2-\alpha-2\gamma(\ell)m^2+(2-\delta)m)=-4\gamma(\ell)m+(2-\delta)$. 
The unique root of this linear function is $m_0= \frac{2-\delta}{4\gamma(\ell)}$. 
Once again, $f'(0)=2-\delta>0$. 
At the other endpoint of the interval $[0,\alpha/2]$, we have $f'(\alpha/2)=-2\gamma(\ell)\alpha+(2-\delta)$. 
It is unclear whether this is necessarily non-positive at the interesting values; for example, if $\delta=1$ and $\ell=2$, making $\gamma(\ell)=1/4$, then $\alpha$ would have to be at least 2 to make this expression non-positive. 
However, as the maximum clique in $G(n,1/2)$ has size $2\log n$, this cannot provide us with a meaningful upper bound. 
So we carry on with the calculation as before, substituting $f(m_0)$ and computing the optimal $\alpha$, and then we check whether $-2\gamma(\ell)\alpha+(2-\delta)$ is non-positive in that optimum. 

Substituting $m=m_0$ in the expression yields $f(m_0)=\alpha^2/2-\alpha+\frac{(2-\delta)^2}{8\gamma(\ell)}$. 
The larger root is $\alpha=1+\sqrt{1-\frac{(2-\delta)^2}{4\gamma(\ell)}}$. 
Observe that the expression under the square root is non-negative as $\delta\in[1,2)$ and $\gamma(\ell)\geq 1/4$ for all $\ell\geq 2$. 
As noted above, this is only justified if $(2-\delta)\leq 2\gamma(\ell)\alpha$. 
If $\ell=2$ then $\gamma(\ell)=1/4$, that is, we need to check if $2-\delta\leq \frac{1}{2} + \frac{1}{2}\sqrt{1-(2-\delta)^2}$, or equivalently whether $3-2\delta\leq \sqrt{1-(2-\delta)^2}$. 
The left hand side is decreasing and the right hand side is increasing in $\delta$, and they are equal when $\delta=6/5$. 
Thus the substitution $m=m_0$ is justified for $\delta\in [6/5,2[$. 
For $\ell=2$ and $\delta\in [1,6/5]$, the best estimate we can obtain is by substituting $m=\alpha/2$ into the function $f$ and optimize for $\alpha$. 
Then $f(\alpha/2)=\alpha^2/2-\frac{\ell-1}{\ell}\alpha^2/4-\alpha+(2-\delta)\alpha/2 = \frac{3}{8}\alpha^2-\frac{\delta}{2}\alpha$ with larger root $4\delta/3$.

Now assume that $\ell\geq 3$; then $\gamma(\ell)\geq 3/8$ according to Theorem~\ref{thm:l-labels}. 
We show that in this case the above $\alpha= 1+\sqrt{1-\frac{(2-\delta)^2}{4\gamma(\ell)}}$ satisfies the inequality $2-\delta\leq 2\gamma(\ell)\alpha$ for any $\delta\in [1,2)$, thereby justifying the substitution $m=m_0$. 
Once again, the left hand side is decreasing and the right hand side is increasing in $\delta$. 
Thus it is enough to verify the inequality for $\delta=1$, that is, $1\leq 2\gamma(\ell)\left(1+\sqrt{1-\frac{1}{4\gamma(\ell)}}\right)$. 
The right hand side is increasing as a function of $\gamma(\ell)$. 
Thus it is enough to check the inequality for $\gamma(\ell)=3/8$, in which case the right hand side is approximately $1.183$. 
\end{proof}

\section{Dense subgraphs}\label{sec:sense}

We show how the same techniques can be applied to prove estimates for the Maximum Density Subgraph Query Problem. 
\begin{proposition}\label{prop:ineqde}
Assume that it is possible to find a subgraph with edge density $\eta\in (3/4,1]$ of size $\alpha\log n$ with $\ell$ adaptive rounds and $n^\delta$ queries in $G(n, 1/2)$ for any large enough $n$ with high probability. Then for all $0\leq m\leq \alpha/2$ we have
$$(\alpha^2/2-2\gamma(\ell)m^2)(1-H(p))-\alpha+(2-\delta)m\leq 0$$ 
where $p=\frac{\eta\alpha^2/2-2\gamma(\ell)m^2}{\alpha^2/2-2\gamma(\ell)m^2}$.
\end{proposition}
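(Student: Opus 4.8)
The plan is to mimic the proof of Proposition~\ref{prop:ineqcl}, adapting the counting argument to allow the subgraph to have density only $\eta$ rather than $1$. As before, for each $0\le m\le \alpha/2$ we encode a candidate subgraph of size $N=\alpha\log n$ by a tuple whose first $M\approx m\log n$ coordinates lie in $\{1,\dots,n^\delta\}$ (the query steps, producing a matching $\mathcal M$ of size $M$ in the spanned $K_N$) and whose remaining $(\alpha-2m)\log n$ coordinates lie in $\{1,\dots,n\}$ (the extra vertices). The number of such tuples is again $2^{(\alpha+(\delta-2)m)\log^2 n}$. The new ingredient is that instead of demanding that all $\binom{N}{2}$ pairs be edges, we only demand that at least an $\eta$-fraction of them be edges; so we must also sum over the choice of which edge slots are present, and bound the probability that a fixed tuple, with a fixed choice of present edges, is consistent with $G(n,1/2)$.

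The key steps, in order: (i) Fix a good tuple. Among the $\binom{N}{2}$ pairs in $K_N$, the critical edges — those whose status was effectively forced by a later query on an overlapping matching edge — number at most $C\le\gamma(\ell)\binom{2M}{2}\le 2\gamma(\ell)m^2\log^2 n+O(\log n)$ by Question~\ref{que:labpri}/Theorem~\ref{thm:l-labels}. The remaining $\binom{N}{2}-C$ pairs are ``free'': their presence/absence is an independent fair coin given the process. (ii) For the subgraph to have density at least $\eta$ we need at least $\eta\binom{N}{2}$ edges total; since at most $C$ of them can be supplied ``for free'' by critical pairs (we may optimistically assume all critical pairs are edges), at least $\eta\binom{N}{2}-C$ of the free pairs must be edges. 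Writing $F=\binom{N}{2}-C\sim(\alpha^2/2-2\gamma(\ell)m^2)\log^2 n$ for the number of free pairs, the number of ways to designate which free pairs are edges, times the probability that a given designation with at least $\eta\binom{N}{2}-C$ edges is realized, is at most $\sum_{j\ge \eta\binom N2-C}\binom{F}{j}2^{-F}$. Since $p:=\bigl(\eta\binom N2-C\bigr)/F=\frac{\eta\alpha^2/2-2\gamma(\ell)m^2}{\alpha^2/2-2\gamma(\ell)m^2}+o(1)\ge 1/2$ (this is where $\eta>3/4$, together with $\gamma(\ell)\le 1/2$, is used to keep $p\le 1$ and $\ge 1/2$ so the tail is governed by its largest term), a standard entropy estimate gives $\sum_{j\ge pF}\binom Fj 2^{-F}\le 2^{-F(1-H(p))+O(\log F)}$. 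Hence the probability a fixed tuple yields a valid subgraph is at most $2^{-(\alpha^2/2-2\gamma(\ell)m^2)(1-H(p))\log^2 n+O(\log n)}$. (iii) Union over all tuples: the success probability is at most
$$2^{(\alpha+(\delta-2)m)\log^2 n}\cdot 2^{-(\alpha^2/2-2\gamma(\ell)m^2)(1-H(p))\log^2 n+O(\log n)}=2^{-\bigl((\alpha^2/2-2\gamma(\ell)m^2)(1-H(p))-\alpha+(2-\delta)m\bigr)\log^2 n+O(\log n)}.$$
If the exponent coefficient were positive for some $m\in[0,\alpha/2]$ this would tend to $0$, contradicting the assumed high-probability success; so the displayed inequality must hold for every such $m$.

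I expect the main obstacle to be the bookkeeping in step (ii): carefully defining ``critical'' so that the non-critical pairs really are mutually independent fair coins conditioned on the transcript of the $\ell$-round process (exactly as in the clique case, where a non-critical edge's outcome is not forced by any later query touching its endpoints via $\mathcal M$), and then justifying that we may assume all $C$ critical pairs are edges while the at-least-$\eta$-fraction constraint is borne entirely by the free pairs — this is the optimistic over-count that makes the bound valid. Once that is set up, the binomial-tail/entropy estimate and the requirement $p\in[1/2,1]$ (forcing the hypothesis $\eta\in(3/4,1]$ given $\gamma(\ell)\le 1/2$, since then $p\ge\frac{\eta\alpha^2/2-\alpha^2/2 \cdot (1/2)\cdot \text{something}}{\cdots}$, i.e. $p$ is a weighted average of $\eta$ and $1/2$ hence $\ge 1/2$, and $\le 1$) are routine, and the union bound and the final rearrangement into the stated quadratic-type inequality are immediate, in complete parallel with Proposition~\ref{prop:ineqcl}.
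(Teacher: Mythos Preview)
Your proposal is correct and follows essentially the same argument as the paper: encode candidate subgraphs by tuples, bound the number of critical edges by $\gamma(\ell)\binom{2M}{2}$, apply the entropy/binomial-tail estimate to the free pairs (using $\eta>3/4$ and $\gamma(\ell)\le 1/2$ to guarantee $p>1/2$), and finish with a union bound. The only point the paper handles slightly more carefully is the monotonicity step---it explicitly checks that replacing the actual critical-edge count $C$ by its upper bound $\gamma(\ell)\binom{2M}{2}$ moves the probability bound in the right direction (via $p'\ge p''>1/2$ and $H$ decreasing on $[1/2,1]$)---whereas you absorb this into the $o(1)$; this is worth spelling out but is not a gap.
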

%
%
\begin{proof}
We follow the argument of the proof of Proposition~\ref{prop:ineqcl}. 
The number of possible tuples that encode a subgraph is $$(n^\delta)^{m\log n}\cdot n^{(\alpha-2m)\log n} = 2^{(\alpha+(\delta-2)m)\log^2 n}.$$ 
Let $C$ be the number of critical edges in the subgraph spanned by the $N=\alpha\log n$ vertices. 
Let $p'$ be so that $p'\big(\binom{\alpha\log n}{2}-C\big)=\eta\binom{\alpha\log n}{2}-C$. 
That is, $p'=\frac{\eta\binom{\alpha\log n}{2}-C}{\binom{\alpha\log n}{2}-C} = 1-\frac{(1-\eta)\binom{\alpha\log n}{2}}{\binom{\alpha\log n}{2}-C}$. 
Let $p''$ be the expression we obtain by increasing $C$ in the defining formula of $p'$ to $\gamma(\ell)\binom{2M}{2}$ where $M=m\log n$. 
That is, $p''=1-\frac{(1-\eta)\binom{\alpha\log n}{2}}{\binom{\alpha\log n}{2}-\gamma(\ell)\binom{2M}{2}}$. 
Note that $p'\geq p''> 1/2$ for $n$ large enough. 
Indeed, since $C\leq \gamma(\ell)\binom{2M}{2}$, $m\leq \alpha/2$, $\eta>3/4$, and $\gamma(\ell)\leq 1/2$, we have $p'\geq p''$ and 
$$p'' = 1-\frac{(1-\eta)\binom{\alpha\log n}{2}}{\binom{\alpha\log n}{2}-\gamma(\ell)\binom{2M}{2}} = 1 - \frac{(1-\eta)(\alpha^2\log^2 n)/2 + O(\log n)}{(\alpha^2\log^2 n)/2-2\gamma(\ell)M^2 + O(\log n)} =$$
$$1 - \frac{(1-\eta)(\alpha^2\log^2 n)/2}{(\alpha^2\log^2 n)/2-2\gamma(\ell)m^2\log^2 n} + O(1/\log n)=1 - \frac{(1-\eta)\alpha^2/2}{\alpha^2/2-2\gamma(\ell)m^2} + O(1/\log n) \geq $$
$$1 - \frac{(1-\eta)\alpha^2/2}{\alpha^2/2-m^2} + O(1/\log n) \geq 1 - \frac{(1-\eta)\alpha^2/2}{\alpha^2/2-\alpha^2/4} + O(1/\log n) = 1 - \frac{(1-\eta)\alpha^2/2}{\alpha^2/4} + O(1/\log n) =$$ 
$$1-2(1-\eta)+ O(1/\log n) \rightarrow 2\eta-1 > 1/2.$$
Due to the definition of $p'$, if the encoded tuple encodes a subgraph with density at least $\eta$, then at least a proportion $p'$ of pairs not constituting a critical edge in the set of size $\alpha\log n$ turned out to be edges of $G(n,1/2)$. 
These are independent events with probability $1/2$, hence we can use the estimate that this probability is at most $2^{-\left(\binom{\alpha\log n}{2}-C\right)(1-H(p'))}$. 
If we decrease $p'$ in this estimate to $p''$, a number still at least $1/2$, then the expression increases, since the Shannon entropy function $H$ is monotone decreasing on $[1/2,1]$. 
Moreover, we can once again replace $C$ by $\gamma(\ell)\binom{2m\log n}{2}$, further increasing the upper estimate of the above probability, yielding the weaker upper bound $2^{-\left(\binom{\alpha\log n}{2}-\gamma(\ell)\binom{2m\log n}{2}\right)(1-H(p''))}$. 
Since 
$$p''=1 - \frac{(1-\eta)\alpha^2/2}{\alpha^2/2-2\gamma(\ell)m^2} + O(1/\log n) = \frac{\eta\alpha^2/2-2\gamma(\ell)m^2}{\alpha^2/2-2\gamma(\ell)m^2} + O(1/\log n) \sim p$$ 
as $n\rightarrow \infty$, and because the function $H$ is continuous, this upper bound is 
$$2^{-\left(\binom{\alpha\log n}{2}-\gamma(\ell)\binom{2m\log n}{2}\right)(1-H(p''))} = 2^{-\left((\alpha^2\log^2 n) /2-2\gamma(\ell)m^2\log^2 n\right)(1-H(q)) \cdot (1+o(1))}=$$
$$2^{-\left(\alpha^2/2-2\gamma(\ell)m^2\right)(1-H(p))\log^2 n \cdot (1+o(1))}.$$

Using the trivial estimate that the probability of a union of events is at most the sum of the probabilities of the events yields 
$$\mathbb{P}\left(\text{the algorithm finds a subgraph of size\,\,}\alpha\log n\,\,\text{with density at least}\,\, \eta\right)\leq $$
$$2^{(\alpha+(\delta-2)m)\log^2 n}\cdot 2^{-\left(\alpha^2/2-2\gamma(\ell)m^2\right)(1-H(p))\log^2 n \cdot (1+o(1))} =$$
$$2^{-((\alpha^2/2-2\gamma(\ell)m^2)(1-H(p))-\alpha+(2-\delta)m)\log^2n) \cdot (1+o(1))}.$$
Hence, if $(\alpha^2/2-2\gamma(\ell)m^2)(1-H(p))-\alpha+(2-\delta)m>0$ for some $0\leq m\leq \alpha/2$, then the above probability would be asymptotically 0, a contradiction. 
\end{proof}

We note that the assumption $\eta\in(3/4,1]$ in Proposition~\ref{prop:ineqde} could be relaxed to the condition that $p > 1/2$ for $n$ large enough. 
We have seen in the proof of Proposition~\ref{prop:ineqde} that the latter requirement is stronger than the former, as $p$ is asymptotically $p''$ and $p''>1/2$ for $n$ large enough. 
However, switching the condition $\eta\in(3/4,1]$ to ``$p> 1/2$ for $n$ large enough'' would make the already complex phrasing of Theorem~\ref{thm:dense} even more roundabout. 

\begin{proof}[Proof of Theorem~\ref{thm:dense}]
The strategy is similar to the proof of Theorem~\ref{thm:main}. 
Given $(\delta,\ell,\eta)$, we are looking for the minimum $\alpha$ such that there is a $0\leq m\leq \alpha/2$ that makes the left hand side of the inequality in Proposition~\ref{prop:ineqde} positive (non-negative). 
To this end, we first find the maximum of the expression in $m\in [0,\alpha/2]$, and then compute the minimum $\alpha$ that makes the expression non-negative for that $m$. 

Let $f(m)=(\alpha^2/2-2\gamma(\ell)m^2)(1-H(p))-\alpha+(2-\delta)m$, where $p$ is short for $\frac{\eta\alpha^2/2-2\gamma(\ell)m^2}{\alpha^2/2-2\gamma(\ell)m^2}$. 
This function is continuous and defined on the bounded, closed interval $[0,\alpha/2]$, thus it has a maximum. 
The derivative is $f'(m)=-4\gamma(\ell)m(1+\log p)+(2-\delta)$. 
As $f'(0)>0$ whenever $\delta\in [1,2)$, the maximum cannot be at the left endpoint of the domain interval. 
The maximum might be attained at the right endpoint $\alpha/2$. 
If this is not the case, then the maximum is in an inner point where the derivative is zero. 
The third derivative is $f'''(m)=\frac{32 \alpha^2 \gamma(\ell)^2 m (1 - \eta) (-16 \gamma(\ell)^2 m^4 + 3 \alpha^4 n - 4 \alpha^2 \gamma(\ell) m^2 (1 + \eta))}{(\alpha^2 - 4 \gamma(\ell) m^2)^2 (-4 \gamma(\ell) m^2 + \eta\alpha^2)^2}$. 
Every factor in this expression is clearly positive except for $-16 \gamma(\ell)^2 m^4 + 3 \alpha^4 n - 4 \alpha^2 \gamma(\ell) m^2 (1 + \eta)$. 
By using $\gamma(\ell)\leq 1/2$ and $m\leq \alpha/2$ we obtain 
$$-16 \gamma(\ell)^2 m^4 + 3 \alpha^4 n - 4 \alpha^2 \gamma(\ell) m^2 (1 + \eta) \geq (10\eta-3)\alpha^4/4 >0.$$
Hence, the derivative $f'(m)$ is convex, and in particular, it has at most two roots. 
If $f'(m)$ has only one root, then it must be the locus of the maximum of $f(m)$. 
If $f'(m)$ has two roots, then due to the convexity of $f'(m)$, the first one is a local maximum and the second one is a local minimum of $f(m)$. 
Thus only the first one can be the locus of the global maximum of $f(m)$, justifying the definition of $m_1$ in the assertion.

Because $m\leq \alpha/2$, clearly we have $(\alpha^2/2-m^2)(1-H(p))-\alpha+(2-\delta)m\rightarrow \infty$ as $\alpha\rightarrow \infty$. 
Hence, the maximum $\alpha$ where the expression is non-negative for any given $m$ is the largest root. 
Given any substitution into $m$, this largest root is an upper estimate for $\alpha_{\star} \left( \delta, \ell, \eta \right)$. 
Thus the minimum of the two candidates $\min(\alpha_1, \alpha_2)$ yields the stronger upper bound. 
\end{proof}

Computer assisted numerical calculations (setting $\delta=1$) suggest that just as it is the case in the MCQP, in the MDSQP the root $m_1$ of the derivative $f'(m)$ is always in the interval $[0,\alpha/2]$ as long as $\ell\geq 3$. 
For $\ell=2$, this is not the case. For instance, if $\delta=1$, then there is a threshold $\eta_0\approx 0.936$ such that if $\eta\leq \eta_0$ then $m_1\in [0,\alpha/2]$, but if $\eta>\eta_0$ then $m_1\notin [0,\alpha/2]$. 
The calculations suggest that (given $\delta=1, \ell=2$) whenever $m_1\in [0,\alpha/2]$ (that is, if $\eta<0.936$), then $\alpha_1<\alpha_2$, thus $\alpha_1$ provides the stronger estimate.
The following graphs represent the best upper bound provided by Theorem~\ref{thm:dense}. 
See a slightly more elaborate explanation below. 

For the first diagram, we numerically approximated $\alpha_0$ for $\eta$ between $0.75$ and $0.99$, with step size $0.01$. 
For the second diagram, we numerically approximated $\alpha_0$ for $\eta$ between $0.98$ and $0.999$, with step size $0.001$. 
In both cases, we added the values for $\eta=1$ estimated by Theorem~\ref{thm:main}; that is, $1+1/\sqrt{2}\approx 1.707$ for $\ell=\infty$, $1+1/\sqrt{3}\approx 1.577$ for $\ell=3$, and $4/3$ for $\ell=2$. 
Moreover, at $\eta=1$ the trivial upper bound $2$ was added to the graph. 
The four graphs represent the trivial upper bound $2/(1-H(\eta))$, the upper bound $\alpha_1$ for $\delta=1$ and $\ell=\infty$, the upper bound $\alpha_1$ for $\delta=1$ and $\ell=3$, and the best upper bound according to the above explanation for $\delta=1$ and $\ell=2$ (that is, for $\eta\leq 0.936$ we use $\alpha_1$, and for $\eta>0.936$ we use $\alpha_2$). 
The estimates are significantly below the trivial upper bound if $\eta$ is close to 1. 
As $\eta$ approaches $0.75$, the gap slightly increases between each estimate and the trivial upper bound. 

\begin{figure}
\begin{center}
\includegraphics[scale=0.5]{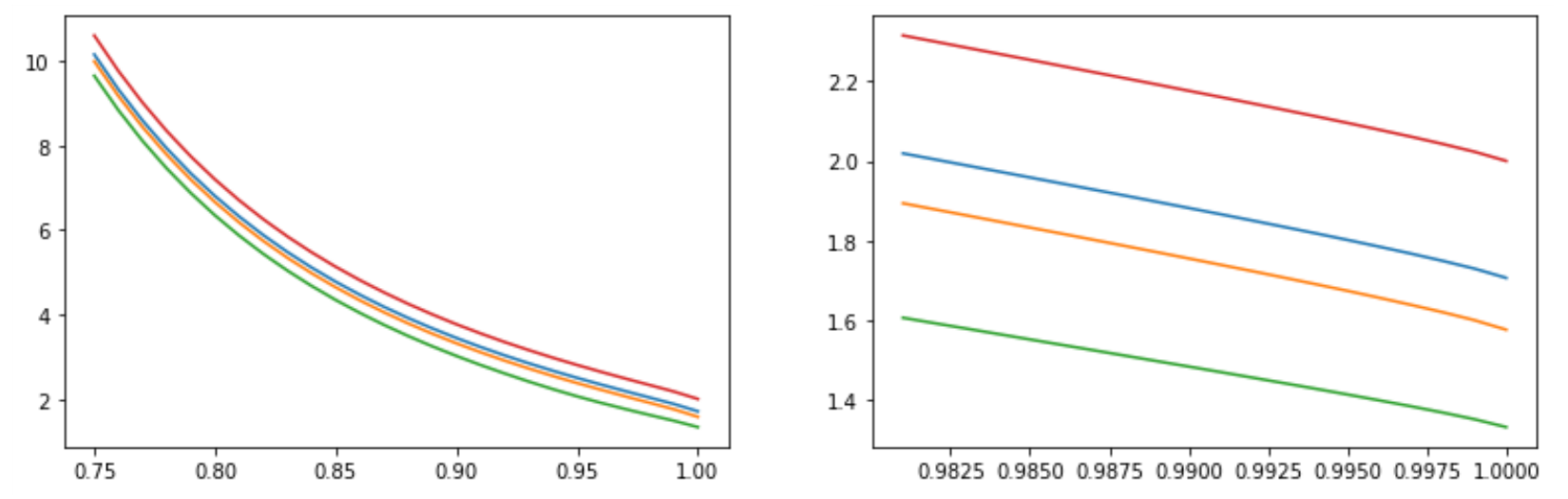}
\end{center}
\caption{Red: trivial upper bound $\frac{2}{1-H(\eta)}$. Blue: $\ell=\infty$. Orange: $\ell=3$. Green: $\ell=2$. ($\delta=1$)}
\end{figure}\label{fig:estimates}

We provide some further justification for computing $\alpha_1$ rather than $\alpha_2$ for $\ell=\infty$ and $\ell=3$. 
In the $\delta=1,\ell=\infty$ case it is easy to see why $\alpha_2$ is irrelevant. 
So assume that $m=\alpha/2$. 
As $\gamma(\infty)=1/2$, the equation in Theorem~\ref{thm:dense} simplifies to 
$(\alpha^2/4)\left(1-H\left(2\eta-1\right)\right)-\alpha/2= 0$, which has the unique solution $\alpha_2=\frac{2}{1-H\left(2\eta-1\right)}$. 
This estimate is even worse than the trivial bound $\frac{2}{1-H\left(\eta\right)}$, since $1/2\leq 2\eta-1\leq \eta$, making $H\left(\eta\right)\leq H\left(2\eta-1\right)$. 
In the $\delta=1,\ell=3$ case we only provide numerical justification. 
As $\gamma(3)=3/8$, the equation in Theorem~\ref{thm:dense} simplifies to 
$(5/16)\alpha^2\left(1-H\left((8\eta-3)/5\right)\right)-\alpha/2= 0$, which has the unique solution $\alpha_2=\frac{8}{5\left(1-H\left((8\eta-3)/5\right)\right)}$. 
This function is sketched in the following diagrams together with $\alpha_1$ (for $\delta=1,\ell=3$) in the same way as before. 

\begin{figure}
\begin{center}
\includegraphics[scale=0.5]{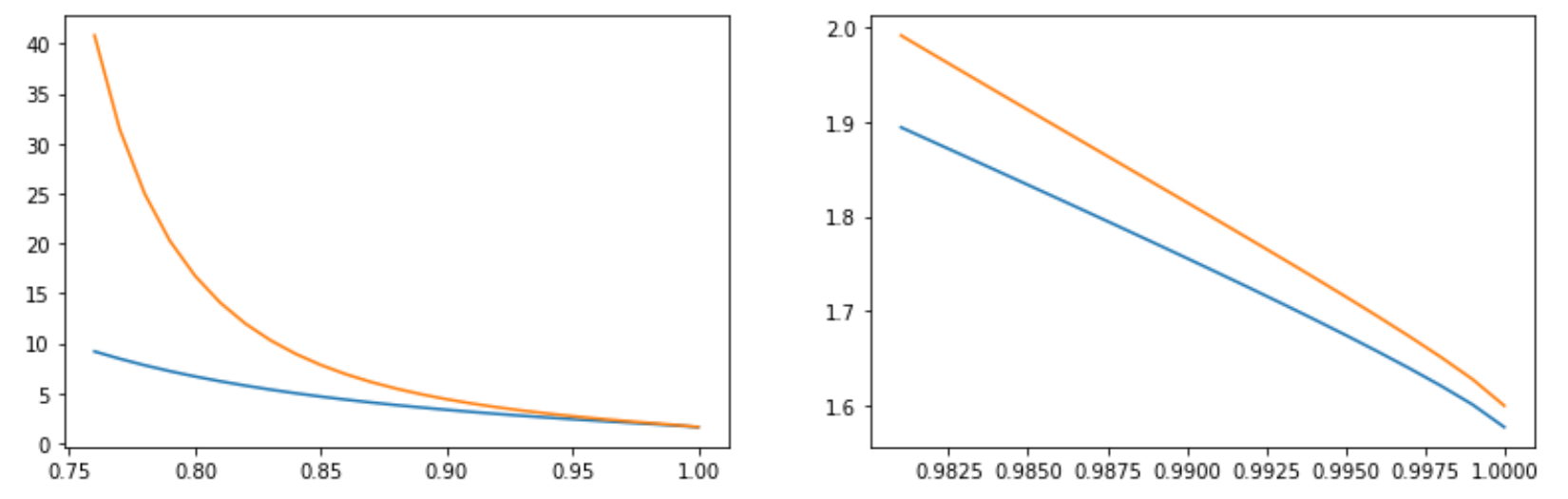}
\end{center}
\caption{Blue: $\alpha_1$ for $\delta=1, \ell=3$. Orange: $\alpha_2$ for $\delta=1, \ell=3$.}
\end{figure}\label{fig:estimatesalph}

These graphs suggest that for $\delta=1$ and $\ell=3$, the substitution $m=m_2(=\alpha/2)$ never yields a better estimate than the stationary point $m=m_1$. 

We can compare $\alpha_1$ and $\alpha_2$ for $\delta=1, \ell=2$ similarly. 
In this case, $\alpha_2=\frac{4}{3\left(1-H\left((4\eta-1)/3\right)\right)}$. 
The two diagrams are harder to distinguish than in the previous case. 
So we also provide the following numerical results (the data used to prepare the diagram on the right): 

\begin{center}
\begin{tabular}{ c c c c c c c c c }
 $\eta$ & 0.930 & 0.931 & 0.932 & 0.933 & 0.934 & 0.934 & 0.936 & 0.937 \\  
 $\alpha_1$ & 2.4116 & 2.3931 & 2.3746 & 2.3562 & 2.3380 & 2.3197 & 2.301617 & 2.28358 \\
 $\alpha_2$ & 2.4133 & 2.3943 & 2.3754 & 2.3567 & 2.3382 & 2.3198 & 2.301621 & 2.28357
\end{tabular}
\end{center}

\begin{figure}
\begin{center}
\includegraphics[scale=0.5]{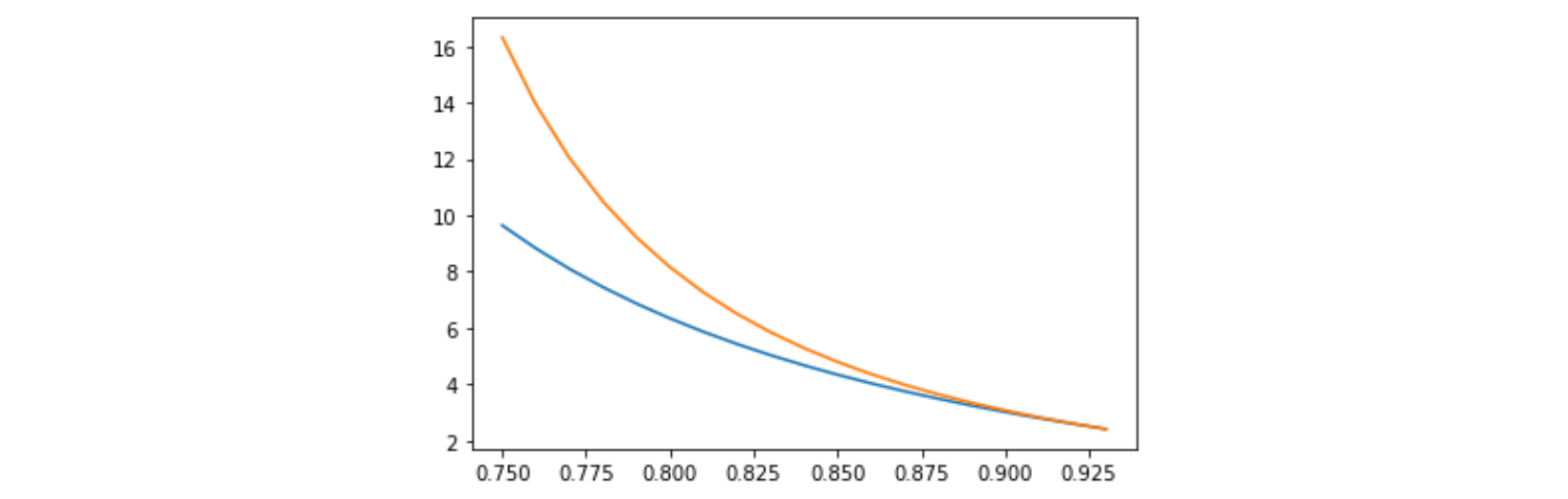}
\end{center}
\caption{Blue: $\alpha_1$ for $\delta=1, \ell=2$. Orange: $\alpha_2$ for $\delta=1, \ell=2$.}
\end{figure}\label{fig:estimatesalph212}

As a final remark, we explain why the requirement we worked with throughout the paper, namely that the algorithm should succeed with high probability, is equivalent to the seemingly weaker requirement that the probability of success should be at least $1/2$. 
Assume that there is an algorithm which finds a clique (or a subgraph with edge density at least $\eta$) of size $\alpha \log n$ in $G(n,1/2)$ with probability at least $1/2$. 
Then we can partition the underlying set of $G(n,1/2)$ into roughly $\log n$ subsets of size roughly $n/\log n$. 
The algorithm finds a clique (or a subgraph with edge density at least $\eta$) of size $\alpha \log (n/\log n)$ in each subset with probability at least $1/2$. 
As $\alpha \log (n/\log n) \sim \alpha \log n$, this yields an algorithm that finds a solution of size asymptotically $\alpha \log n$ with probability at least $1-1/n$. 
Indeed, the probability that the original algorithm fails in all $\log n$ subsets is at most $2^{-\log n}=1/n$. 
This argument can be generalized to some other subgraph query problems, as well. 
It does not work when we are looking for a global structure such as a Hamiltonian cycle; nevertheless, the statement itself (that the two requirements are equivalent) can be true in such a setup as well.


\section*{Acknowledgements}

This paper was initiated at the Focused Workshop on Networks and Their Limits held at the Erd\H{o}s Center (part of the Alfr\'ed R\'enyi Institute of Mathematics) in Budapest, Hungary in July 2023. 
We thank the organizers, Mikl\'os Ab\'ert, Istv\'an Kov\'acs, and Bal\'azs R\'ath, for putting together an  excellent event, and the participants of the workshop for helpful discussions. 
We are especially thankful to Mikl\'os R\'acz for posing the main problem and providing useful insights. 
The workshop was supported by the ERC Synergy grant DYNASNET 810115. 
The authors were supported by the NRDI grant KKP~138270. 

\bibliographystyle{plain}
\bibliography{refs}




\end{document}